\title{A definability criterion for connected Lie groups}
\author{Alf Onshuus, Sacha Post \\ Universidad de los Andes, Bogot\'a Colombia}
\newtheorem{theorem}{Theorem}
\newtheorem{lemma}{Lemma}[section]
\newtheorem{corollary}[lemma]{Corollary}
\newtheorem{fact}[lemma]{Fact}
\newtheorem{proposition}[lemma]{Proposition}
\newtheorem{claim}[lemma]{Claim}
\begin{document}
	
	\maketitle

	\begin{abstract}
		It has been known since \cite{Pgroupchunk} that any group definable in an $o$-minimal expansion of the real field can be equipped with a Lie group structure. It is therefore natural to ask when is a Lie group Lie isomorphic to a group definable in such an expansion. Conversano, Starchenko and the first author answered this question in \cite{COSsolvable} in the case when the group is solvable. This paper answers similar questions in more general contexts.

We first give a complete classification in the case when the group is linear. Specifically, a linear Lie group $G$ is Lie isomorphic to a group definable in an $o$-minimal expansion of the reals if and only if its solvable radical has the same property.

We then deal with the general case of a connected Lie group, although unfortunately we cannot achieve a full characterization. Assuming that a Lie group $G$ has a ``good Levi descomposition'', we prove that in order for $G$ to be Lie isomorphic to a definable group it is necessary and sufficient that its solvable radical satisfies the conditions given in \cite{COSsolvable}.
	\end{abstract}
	
	\section{Introduction}
	
	We start with some definitions. We shall follow the usual notation for classical $o$-minimal structures (see \cite{vdD}): $\mathbb{R}_{\exp}$ for the structure $(\mathbb{R},0,+,1,\cdot,<,\exp)$ and $\mathbb{R}_{an,\exp}$ for the structure $\left(\mathbb{R},0,+,1,\cdot,<,\exp,\left\{f\right\}_{f\in\mathcal{F}}\right)$ where $\mathcal{F}$ is the set of analytic functions $f:\mathbb{R} \rightarrow \mathbb{R}$ with compact support.
	
	We will always work with $\mathbb R$ as our universe. In particular, by \emph{definable group} we will mean a group definable in a cartesian power of an $o$-minimal expansion of the reals. We will say that a  group is \emph{definably linear} if it is a definable group that acts definably, faithfully and linearly on $\mathbb{R}^n$, for some integer $n$.
	
	\medskip
	
A  group definable in an $o$-minimal expansion of the reals can be equipped with a Lie group structure (see \cite{Pgroupchunk}). In this paper we work on some partial results to characterize when the converse is true. An earlier result, a characterization of when a solvable Lie group is Lie-isomorphic to a definable group was achieved by A. Conversano, S. Starchenko and the first author:

	\begin{fact}[Solvable case \cite{COSsolvable}, Theorem $5.4$] \label{solvablecriterion}
		Let $R$ be a solvable Lie group. Then the following are equivalent:
		\begin{itemize}
			\item $R$ has a normal, connected, torsion-free and supersolvable subgroup $T$ such that $R/T$ is compact (we say that $R$ is \emph{triangular by compact}).
			\item $R$ is Lie isomorphic to a group definable in an $o$-minimal expansion of the reals.
		\end{itemize}
	\end{fact}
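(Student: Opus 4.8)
I would prove the two implications separately.

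\textbf{From definability to the structural condition.} First I would transport the Lie structure along the given isomorphism, so that $R$ may be assumed to be a group definable in some $o$-minimal expansion $\mathcal{M}$ of the reals whose definable (Pillay) Lie structure is the given one; a short argument then reduces matters to $R$ connected, the finitely many cosets of $R^{\circ}$ being absorbed into the compact quotient produced below. For connected solvable definable $R$ the plan is to invoke the structure theory of solvable definable groups (Peterzil--Starchenko, Edmundo, and the preliminaries of \cite{COSsolvable}): such an $R$ has a definable, connected, torsion-free, normal subgroup $N$ with $R/N$ definably compact. I would then note that $N$ is supersolvable --- by the description of torsion-free definable groups it has a definable normal series with all quotients $\cong(\mathbb{R},+)$ --- and that over $\mathbb{R}$ every definably compact definable group is compact in the Euclidean topology (a definable, definably compact subset of $\mathbb{R}^{n}$ is closed and bounded, by curve selection), hence a compact Lie group. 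Taking $T:=N$ then witnesses that $R$ is triangular by compact.

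\textbf{From the structural condition to definability.} Conversely, given a normal, connected, torsion-free, supersolvable $T\trianglelefteq R$ with $C:=R/T$ compact, I would reduce again to $R$ connected, so that $C$ is a torus $\mathbb{T}^{m}$, and split the extension $1\to T\to R\to C\to 1$: choosing a maximal compact subgroup $K$ of the connected solvable group $R$, it is a torus, and $K\cap T$ is trivial (being a compact subgroup of the torsion-free $T$), so $K$ embeds as a subtorus of $C$; were this embedding not onto, $R$ would map onto the nontrivial torus $C/(KT/T)$ with connected kernel while the image of the maximal compact $K$ there would be trivial, contradicting the Iwasawa--Mostow fact that a quotient by a connected normal subgroup carries maximal compacts onto maximal compacts. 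Hence the embedding is an isomorphism and $R=T\rtimes_{\phi}K$, with $\phi\colon K\to\operatorname{Aut}(T)\cong\operatorname{Aut}(\mathfrak{t})\hookrightarrow GL(\mathfrak{t})$. Now $T$, being simply connected and supersolvable, is Lie-isomorphic to a group definable in $\mathbb{R}_{\exp}$ --- present it as an iterated semidirect product $T_{i+1}=T_{i}\rtimes_{\alpha_{i}}\mathbb{R}$ along its flag, with structure maps built from matrix exponentials $\exp(sD_{i})$; the torus $K=\mathbb{T}^{m}$ is semialgebraic; and after conjugating $\phi(K)$ into a maximal torus of $SO(\dim\mathfrak{t})$, $\phi$ is given by $2\times 2$ rotation blocks whose entries are characters of $K$, hence semialgebraic. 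So $R=T\rtimes_{\phi}K$ is a group definable in $\mathbb{R}_{\exp}$, and, each piece having been presented with its definable topology equal to its Lie topology and $\phi$ being smooth, this definable group is Lie isomorphic to $R$.

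\textbf{The main obstacle.} I expect the real difficulty to sit in the structure theorem invoked in the first implication --- that a connected solvable definable group splits off a definable torsion-free normal subgroup with definably compact quotient --- which is the one genuinely $o$-minimal ingredient; the remainder is classical Lie theory together with a careful but essentially routine verification of definability. A subsidiary delicate point is checking, in the converse direction, that the assembled semidirect product is simultaneously definable and carries the correct (Lie) smooth structure.
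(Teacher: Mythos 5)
This statement is quoted in the paper as a \emph{Fact} imported from \cite{COSsolvable} (Theorem 5.4); the paper gives no proof of it, so there is nothing internal to compare against. Judged on its own terms, your sketch reconstructs what is essentially the argument of that reference, and the closely related material the paper does prove: your splitting $R=T\rtimes K$ with $K$ a maximal compact torus is exactly Malcev's theorem (Fact~\ref{semidirectproperty}), and your forward direction correctly isolates the genuinely $o$-minimal input, namely Conversano's structure theorem producing a definable torsion-free normal $N$ with $R/N$ definably compact, together with supersolvability of definable torsion-free groups over $\mathbb{R}$ and the equivalence of definable compactness with compactness over the reals.

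The one place where your sketch hides real work is the definability of the action in the converse direction. You present $T_{\mathrm{def}}$ as an iterated semidirect product along a flag of $\mathfrak{t}$, and separately conjugate $\phi(K)$ into a maximal torus of $SO(\dim\mathfrak{t})$; but the automorphism $\phi(k)$ of the \emph{group} $T_{\mathrm{def}}$ is $\exp\circ\,d\phi(k)\circ\log$, so its definability requires the basis of $\mathfrak{t}$ to \emph{simultaneously} upper-triangularize $\operatorname{ad}(\mathfrak{t})$ (so that $\exp\restriction_{\mathfrak t}$ is definable in $\mathbb{R}_{\exp}$) and orthogonalize the $K$-action into rotation blocks, and the flag must be $K$-invariant. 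Such a simultaneous normal form does exist (average an inner product over $K$ and choose the flag $K$-equivariantly; this is the technical core of the construction in \cite{COSsolvable}, cf.\ Lemma 3.1 there), but as written your two normalizations are performed independently and could be incompatible. A second, smaller gap: the statement as quoted does not assume $R$ connected, and reducing to the connected case in the converse direction requires realizing a finite extension of a definable group definably, which is not automatic (it is essentially the subject of Section~\ref{sec4} of this paper); in \cite{COSsolvable} the group is taken connected, which resolves this.
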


Recall that a connected Lie group is said to be \emph{supersolvable} (sometimes called \emph{triangular}) if the eigenvalues of the operator $Ad(g)$ of the adjoint representation are real for all $g\in G$. This is equivalent to saying that $\mathfrak{g}=Lie(G)$ is supersolvable as described in \cite[Section 2]{COSsolvable}.
	
\medskip

The paper is divided as follows. In \Cref{sec2} we strengthen \Cref{solvablecriterion} to show that any Lie group satisfying the first condition in \Cref{solvablecriterion} is in fact Lie-isomorphic to a definably linear group. Namely we prove

	\begin{restatable}{theorem}{solvlintheorem}\label{solvlintheorem}
	Let $G$ be a connected, triangular by compact, solvable Lie group. Then $G$ is Lie-isomorphic to a definably linear group.
	\end{restatable}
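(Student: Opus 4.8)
The plan is to produce an explicit definable embedding of $G$ into some $GL_N(\mathbb{R})$, refining the information from \Cref{solvablecriterion} that $G$ is Lie‑isomorphic to a definable group. First I would peel off a maximal torus. Let $T\trianglelefteq G$ be a normal, connected, torsion‑free, supersolvable subgroup with $G/T$ compact, as in the definition of triangular by compact. Being connected and torsion‑free, $T$ has no non‑trivial compact subgroup, hence (every connected Lie group being diffeomorphic to the product of a Euclidean space with its maximal compact subgroup) is contractible, in particular simply connected; supersolvable and simply connected, it is split (completely) solvable. The quotient $G/T$ is a compact connected solvable Lie group, hence a torus $\mathbb{T}^j$. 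Fix a maximal compact subgroup $K\le G$, itself a torus. Then $K\cap T$ is a torsion‑free closed subgroup of a torus, so trivial; the fibration $T\to G\to G/T$ with contractible fibre gives $\pi_1(G)\cong\pi_1(G/T)\cong\mathbb{Z}^j$, while the same decomposition for $G$ gives $\pi_1(G)\cong\mathbb{Z}^{\dim K}$, whence $\dim K=j$; so $TK$ is a closed subgroup of full dimension and $G=TK=T\rtimes K$, with $K$ acting on $T$ through a homomorphism $\phi\colon K\to\operatorname{Aut}(T)=\operatorname{Aut}(\mathfrak{t})$.

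Next I would make the two factors linear and definable. Since $T$ is simply connected and completely solvable it is, by a classical theorem, isomorphic to a closed subgroup of the group $T_n(\mathbb{R})$ of real invertible upper‑triangular matrices (Ado's theorem plus Lie's theorem, which over $\mathbb{R}$ applies because $\mathfrak{t}$ is split solvable, plus the fact that connected subgroups of simply connected solvable Lie groups are closed); in exponential coordinates this embedding, and the induced group law, are given by matrix exponentials of upper‑triangular matrices with real eigenvalues---polynomials in the coordinates times exponentials of linear forms---hence are definable in $\mathbb{R}_{\exp}$. On the other side $K\cong(\operatorname{SO}(2))^k$ is semialgebraic in $GL_{2k}(\mathbb{R})$, and any finite‑dimensional representation of the torus $K$ is a sum of rotation blocks and trivial summands, hence semialgebraic in the ``circle coordinates'' of $K$; in particular $\phi$ (which acts $\mathbb{R}$‑linearly on $\mathfrak{t}$, that is, on the exponential coordinates of $T$) is definable, so $G=T\rtimes_{\phi}K$ is itself a group definable in $\mathbb{R}_{\exp}$.

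To glue them I would build a faithful finite‑dimensional representation $\rho$ of $G$ as a direct sum of three pieces: (i) the adjoint representation $\operatorname{Ad}\colon G\to GL(\mathfrak{g})$, with kernel $Z(G)$; (ii) $\rho_K\circ\pi$, where $\pi\colon G\to G/T\cong K$ and $\rho_K$ is the block‑rotation representation of $K$---together with (i) this leaves only the kernel $Z(G)\cap T$; and (iii) a finite‑dimensional representation of $G$ that is faithful on $Z_0:=Z(G)\cap T$ (a closed, $K$‑fixed vector subgroup $\cong\mathbb{R}^{c_0}$ of $Z(T)$) and whose restriction to $T$ is triangular. Each summand restricts on $T$ to an exponential of real‑eigenvalue triangular matrices and on $K$ to a torus representation, so $\rho$ is definable in $\mathbb{R}_{\exp}$; since $\rho$ is injective, $\rho(G)\le GL_N(\mathbb{R})$ is a group definable in $\mathbb{R}_{\exp}$ that is Lie‑isomorphic to $G$ and carries the tautological faithful definable linear action on $\mathbb{R}^N$. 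That is exactly what it means for $G$ to be Lie‑isomorphic to a definably linear group.

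The one genuinely non‑routine point is the existence half of (iii): that $G=T\rtimes K$ admits \emph{any} faithful finite‑dimensional representation, with the triangular restriction to $T$ needed for $\mathbb{R}_{\exp}$‑definability. In contrast to the linearity of $T$ and of $K$ separately, this is a real restriction---it fails if $K$ is replaced by a non‑compact one‑parameter group, e.g.\ the four‑dimensional oscillator group $H_3\rtimes\mathbb{R}$ has no faithful finite‑dimensional representation---so the argument must use compactness of $K$. I would run the representation theory of the extension $1\to T\to G\to K\to1$ with $K$ compact: analyse how the torus $K$ permutes the weights (one‑dimensional, by split solvability of $\mathfrak{t}$) of a faithful triangular representation of $T$, use that $Z_0\subseteq T$ so that representations of $\mathfrak{t}$ automatically integrate over $T$, and dispose of the resulting Mackey obstruction using that $K$ is a torus, to obtain the representation of $G$ required in (iii). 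Everything else---the decomposition and the definability bookkeeping---is routine.
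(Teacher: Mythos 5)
Your overall architecture --- decompose $G=T\rtimes K$ with $T$ simply connected supersolvable and $K$ a torus, represent $T$ triangularly and $K$ by rotation blocks, and assemble a faithful definable representation as a direct sum whose kernels intersect trivially --- is essentially the paper's, and the bookkeeping showing that the kernel of $\operatorname{Ad}\oplus(\rho_K\circ\pi)$ is $Z(G)\cap T$ is fine. The problem is that the entire difficulty of the theorem is concentrated in your item (iii), and there you do not give a proof: you name the step, correctly observe that it must use compactness of $K$, and then defer to an unexecuted plan (``analyse how the torus permutes the weights \dots\ dispose of the resulting Mackey obstruction''). What is needed is a finite-dimensional representation of $G$ that is simultaneously (a) faithful on a prescribed subgroup of $T$, (b) triangular with real weights on $T$ so that its restriction to $T$ is $\mathbb{R}_{\exp}$-definable, and (c) definable on all of $G$. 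This is exactly what the paper's Extension Lemma (\Cref{extlemma}) delivers, and its proof is genuinely nontrivial: one lets $G$ act on the representative functions of the normal solvable factor and must show that the span of the $K$-translates of $S(\rho)$ is finite-dimensional; that is Hochschild's \Cref{lemmafindim}, and it requires the hypothesis that $\rho'$ kills all commutators $xyx^{-1}y^{-1}$ --- a condition the paper arranges by choosing $\rho$ unipotent on the nilradical (\Cref{supersolvablerep}) and using $[\mathfrak{g},\mathfrak{t}]\subseteq\mathfrak{n}$. Nothing in your sketch addresses this finite-dimensionality obstruction, nor the definability of whatever representation the Mackey-type analysis would produce; until (iii) is actually constructed, the proof is incomplete at its central point.

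Two smaller inaccuracies. First, your motivating counterexample is wrong: $H_3\rtimes\mathbb{R}$ (the simply connected oscillator group) is a simply connected solvable Lie group and therefore \emph{does} admit a faithful finite-dimensional representation, by Malcev's theorem (\Cref{semidirectproperty} with trivial torus); the standard non-linear solvable example is rather $H_3/\Gamma$ for a lattice $\Gamma$ in the centre, where the obstruction is a compact subgroup inside the commutator subgroup. Second, $Z(G)\cap T$ is a closed torsion-free subgroup of $T$ but need not be connected, so describing it as a vector subgroup $\mathbb{R}^{c_0}$ is unjustified; this is harmless for faithfulness but matters for the weight analysis you propose to run on it.
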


\medskip

This strengthening is key for the main result of \Cref{sec3} (and one of the two main results of this paper): A complete characterization of when a linear Lie group is Lie isomorphic to a definable (definably linear) group.

	\begin{restatable}{theorem}{lintheorem}\label{criterionthm}
	Let $G$ be a connected linear Lie group whose solvable radical is triangular by compact. Then $G$ is Lie-isomorphic to a definably linear group.
	\end{restatable}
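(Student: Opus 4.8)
The plan is to split $G$ along its Levi decomposition, realize the semisimple factor and the solvable radical as definably linear groups by separate means, and then glue. I would first fix a Levi decomposition $G = L\cdot R$, with $R = \operatorname{rad}(G)$ the solvable radical and $L$ a Levi subgroup, so that $G = LR$ and $L\cap R$ is discrete. Since $L$ is a closed subgroup of the linear group $G$ it is a connected linear semisimple Lie group, and such a group has finite center; hence $L\cap R$, being a discrete normal subgroup of the connected group $L$, is a \emph{finite} central subgroup of $L$. A direct computation in the semidirect product shows that the multiplication map $L\ltimes_\alpha R\to G$, where $\alpha$ is the conjugation action of $L$ on $R$, is a surjective homomorphism whose kernel $F = \{(x,x^{-1}) : x\in L\cap R\}$ is a finite central subgroup; thus $G$ is Lie isomorphic to $(L\ltimes_\alpha R)/F$, and it suffices to realize $L\ltimes_\alpha R$ as a definably linear group and then pass to the quotient by $F$.

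For the two halves: a connected linear semisimple Lie group is semialgebraic — it is the identity component of the real points of its Zariski closure, which is a semisimple algebraic group — so $L$ is Lie isomorphic to a definably linear group; and, $R$ being solvable and triangular by compact, \Cref{solvlintheorem} gives that $R$ too is Lie isomorphic to a definably linear group. Fix faithful definable linear realizations of $L$ and of $R$.

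The heart of the proof is to glue these into a faithful definable linear representation of $L\ltimes_\alpha R$. The crucial point is that $\alpha$ is algebraic in nature: because $L$ is connected, $\alpha(L)\subseteq\operatorname{Aut}(R)^\circ$, and an element of $\operatorname{Aut}(R)^\circ$ is determined by its differential, so $\alpha$ is entirely encoded by the restriction to $L$ of the adjoint representation $\operatorname{Ad}_G\colon L\to GL(\mathfrak r)$, which is definable (indeed semialgebraic) because $L$ is semialgebraic. Assuming the realization of $R$ from \Cref{solvlintheorem} is sufficiently compatible with this Lie-theoretic data, one obtains that $\alpha$ defines a definable action of $L$ on $R$, so that $L\ltimes_\alpha R$ is a definable group; extending the representation of $R$ to an $L$-equivariant linear action (which is possible precisely because $\alpha$ is inherited from $\operatorname{Ad}$) and pairing it with a faithful representation of $L$ then yields a faithful definable linear representation of $L\ltimes_\alpha R$. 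Finally, since $F$ is finite, central, and sits inside the essentially algebraic factor $L$, a standard argument lets this representation descend to $(L\ltimes_\alpha R)/F\cong G$.

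The step I expect to be the main obstacle is this gluing: upgrading the Lie-algebra-level definability of $\alpha$ to genuine definability of the action $L\times R\to R$ for the particular realization of $R$ produced by \Cref{solvlintheorem}, and simultaneously exhibiting the ambient $L$-equivariant linear structure needed to combine the two representations. This appears to force one to use the concrete shape of that realization — exponential coordinates on the triangular (supersolvable) part, a compact torus factor on which the connected group $L$ is necessarily acting trivially, and the way these pieces are patched together — rather than merely its existence, and then to verify that everything assembles into a single faithful definable linear representation of $G$; by comparison, the reduction to $L\ltimes_\alpha R$ and the passage to the finite central quotient are comparatively routine.
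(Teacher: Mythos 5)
Your overall architecture---split $G$ along its Levi decomposition, realize the semisimple factor (semialgebraic, by \Cref{semisimplelinear}) and the solvable radical (definably linear, by \Cref{solvlintheorem}) separately, then glue---is the same as the paper's, and those two preliminary steps are fine. But the step you yourself flag as the main obstacle is a genuine gap, and the way you propose to close it does not work as stated. The assertion that one can extend the representation of $R$ to an $L$-equivariant linear action ``because $\alpha$ is inherited from $\operatorname{Ad}$'' proves nothing: since $G$ is linear, a representation of $G$ faithful on $R$ always exists (the inclusion $G\subseteq GL_n(\mathbb R)$), and the entire difficulty is to produce one whose \emph{image is definable}. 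That is governed by the \nameref{extlemma} (or its classical Lie version in Hochschild), which (a) requires an honest semidirect product $K\ltimes H$ with $H$ normal and solvable---whereas $G=L\cdot R$ is only an almost semidirect product with finite intersection---and (b) carries the hypothesis $\rho'(xyx^{-1}y^{-1})=\operatorname{id}$, which must actually be verified for the chosen representation of the normal factor. Neither issue is addressed by the observation that $\alpha$ is determined by $\operatorname{Ad}_G\restriction L$.

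The paper's proof is shaped precisely by these two obstructions, and this is where it genuinely diverges from your plan: it refines the decomposition rather than working with $L\ltimes_\alpha R$. Writing $R=T\rtimes K$ with $T$ torsion-free supersolvable and $K$ compact, it first chooses the Levi subgroup $S$ to commute with $K$ (via the splitting $\mathfrak g=[\mathfrak k,\mathfrak g]\oplus\mathfrak z_{\mathfrak g}(\mathfrak k)$ and a Levi factor of the centralizer), so that $H:=SK$ is a group and $G=T\rtimes H$ is a \emph{genuine} semidirect product (finiteness of $T\cap H$ plus torsion-freeness of $T$). The extension theorem is then applied with normal factor $T$, using the representation from \Cref{supersolvablerep}, which is unipotent on the nilradical---that unipotence is exactly what validates the commutator hypothesis, and it is not available for a faithful representation of all of $R$ because of the compact torus $K$. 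Finally $H\simeq(S\times K)/F$ with $F=S\cap K$ finite and central is handled by \Cref{quotientrep} and pulled back through $G\to G/T\cong H$. If you insist on your $L\ltimes_\alpha R$ route you would additionally have to prove that the conjugation action of $L$ on the particular definable copy of $R$ is definable (in particular on the compact part, where $\operatorname{Ad}$ alone does not hand you the action on group elements)---this is the hard analysis the paper only carries out in the non-linear setting of \Cref{generalcase}---and to justify that $F=\{(x,x^{-1}):x\in L\cap R\}$ is central (not merely normal) in $L\ltimes_\alpha R$ before invoking a quotient-by-finite-central-subgroup argument.
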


It maybe worth mentioning that we cannot avoid working up to Lie isomorphism in this theorem as there are some presentations of groups that are not definable in any $o$-minimal expansion of the reals. For example,
	
	\[ G= \left\{ \begin{pmatrix}
	e^t & 0 & 0\\
	0 & \cos (t) & -\sin (t) \\
	0 & \sin (t) & \cos (t)
	\end{pmatrix}  :t \in \mathbb{R} \right\} \]
	is not definable in any $o$-minimal expansion of $\mathbb{R}$ but it is Lie isomorphic to $(\mathbb{R},+)$ which is definable.

\medskip

We then work towards understanding the situation without the hypothesis of linearity. Namely, when is a connected Lie group $G$ Lie-isomorphic to a definable group?

Any definable group $G$ has a definable solvable radical $R$ (\cite{BJOcom}) and, as we will show in \Cref{solvlintheorem}, $R$ is in fact linear. Intending on using the Levi decomposition, we turn our focus to the Levi subgroups $S$ of $G$. If $S$ is definable it has finite center (since otherwise we would have a definable infinite discrete subgroup of a definable group which contardicts o-minimality). We show in \Cref{sec5}  that if $S$ has finite center we can build a definable group which is Lie-isomorphic to the original group, thus achieving a definability criterion for some non linear Lie groups: those that have a ``good Levi decomposition''.
	
	\begin{restatable}{theorem}{generalcase}\label{generalcase}
		Let $G$ be a connected Lie group. Let $R$ be its solvable radical and $S$ a Levi subgroup of $G$. If  $S$ has finite center and $R$ is triangular by compact then $G$ is Lie-isomorphic to a definable group.
	\end{restatable}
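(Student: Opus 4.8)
The plan is to present $G$ as a quotient, by a finite normal subgroup, of a semidirect product of a definably linear group by a definable semisimple group, and then to invoke the closure of the class of definable groups under definable quotients.

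First I would record the algebraic skeleton. With $S$ the chosen Levi subgroup, the multiplication map $\mu\colon R\rtimes S\to G$, $(r,s)\mapsto rs$ — where $R\rtimes S$ is formed using the conjugation action of $S$ on the normal subgroup $R$ — is a surjective homomorphism of Lie groups with kernel $N=\{(x,x^{-1}):x\in R\cap S\}$. Since $R\cap S$ has trivial Lie algebra it is discrete, and being normal in the connected group $S$ it is central in $S$; as $Z(S)$ is finite, $N$ is finite. Hence $G\cong (R\rtimes S)/N$ with $N$ finite, and it suffices to realize $R\rtimes S$ as a definable group and then pass to the quotient.

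Next I would make the two factors definable in a single $o$-minimal expansion of the reals. As $R$ is triangular by compact, \Cref{solvlintheorem} lets me identify $R$ with a definably linear group, i.e.\ a definable subgroup of some $GL_m(\mathbb R)$. For $S$: being semisimple with finite center, $S/Z(S)=\operatorname{Ad}(S)$ is the identity component of the real points of a semisimple algebraic group of adjoint type, hence semialgebraic, and $S$ is a finite central extension of it; since a finite cover of a definable group is again Lie-isomorphic to a definable group, I obtain a definable group $S_0$ and a fixed Lie isomorphism $S_0\cong S$. (The case $R=\{1\}$ of the theorem is precisely the assertion that a connected semisimple Lie group with finite center is Lie-isomorphic to a definable group, so this input is unavoidable; it is also why linearity had to be dropped in \Cref{criterionthm} — the definable model produced here need not be a linear group.)

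Finally I would glue the two factors definably. The conjugation action $\sigma\colon S\to\operatorname{Aut}(R)$, composed with the injection $\operatorname{Aut}(R)\hookrightarrow\operatorname{Aut}(\mathfrak r)\le GL(\mathfrak r)$ (injective since $R$ is connected), is a Lie homomorphism whose image is a connected semisimple, hence algebraic, subgroup of $GL(\mathfrak r)$; the work is to choose the definable models of $R$ and of $S$ compatibly so that this homomorphism is definable and, more importantly, so that the induced action of $S_0$ on the group $R$ (transported along $S_0\cong S$) is a definable map $S_0\times R\to R$. Granting this, $R\rtimes S_0$ is a definable group Lie-isomorphic to $R\rtimes S$, the image $N_0$ of $N$ is a finite normal subgroup, and $(R\rtimes S_0)/N_0$ is a definable group Lie-isomorphic to $G$, as wanted. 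I expect this last gluing step — upgrading the abstract Lie action of $S$ on $R$ to a definable one while keeping the semisimple factor definable — to be the main obstacle; the handling of the finite subgroups $N$, $N_0$ and of the definable quotient is routine.
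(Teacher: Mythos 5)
Your skeleton is exactly the paper's: realize $R$ as a definably linear group via \Cref{solvlintheorem}, realize $S$ as a definable finite cover of its linear (semialgebraic) quotient via \Cref{Liedefuniversal}, and glue along the conjugation action before killing the finite intersection $R\cap S$. The bookkeeping with $N=\{(x,x^{-1}):x\in R\cap S\}$ and the quotient by a finite normal subgroup is fine. But the step you flag as ``the main obstacle'' and then grant --- upgrading the abstract action of $S$ on $R$ to a definable map $S_0\times R_{\text{def}}\to R_{\text{def}}$ --- is where essentially the entire content of the paper's proof lives, so as written the proposal has a genuine gap rather than a proof. In particular, your observation that the image of $S\to\operatorname{Aut}(\mathfrak r)\le GL(\mathfrak r)$ is semisimple and hence semialgebraic only gives definability of the \emph{Lie algebra} action $\varphi_{\text{Lie}}$; it does not by itself produce a definable action on the group $R_{\text{def}}$, because the exponential map of $R_{\text{def}}$ is not globally definable and one cannot simply transport the linear action through it.

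Concretely, the paper resolves this by splitting $R_{\text{def}}=T\rtimes K_0$ and treating the two pieces separately. On the supersolvable part the exponential $\exp\restriction_{\mathfrak t}$ \emph{is} definable (the representation was chosen upper triangular), so $\varphi_{\text{lin}}(s)(t)=\exp\restriction_{\mathfrak t}(\varphi_{\text{Lie}}(s)(X))$ with $\exp(X)=t$ is definable. The compact part requires a separate argument: one first shows the induced action on $\mathbf K=R_{\text{def}}/T$ is trivial (because $\operatorname{Aut}$ of a torus is discrete and $S_{\text{lin}}$ is connected), then identifies the set of maximal compact subgroups with a definable quotient $T/_\sim$ and proves the orbit map $s\mapsto\Phi(s,[e_T])$ is definable by comparing Lie algebras inside a bounded neighborhood where $\exp$ is definable in $\mathbb R_{\text{an},\exp}$, and finally reconstructs the action on $K_0$ from the triviality on $\mathbf K$ together with the definable sections $\sigma_t:\mathbf K\to K_t$. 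None of this is routine, and without it your argument does not close. (A minor further point: the paper passes through $S/\Lambda(S)$, the maximal linear quotient, rather than $S/Z(S)$, precisely so that \emph{every} representation of $S$ --- in particular the one induced by the action on $\mathfrak r$ --- factors through the chosen linear model; with $S/Z(S)$ you would still need to check that the action factors appropriately.)
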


\medskip	

To prove this result we needed to work out a definability of finite covers of definable Lie groups, a result that may be of independent interest, so we included this result in  \Cref{sec4}.

\medskip

In order to have a full characterization of those Lie group that have a Lie-isomorphic definable copy, one would need to understand  definable groups without a good Levi decomposition. This work was started in \cite{levidecomin}. We examples and results in these papers (and what they mean for a complete characterization of which Lie groups have a definable copy) at the end of \Cref{sec5}.

\section{Definable linearity of definable solvable Lie groups}\label{sec2}

	In this section, we will prove \Cref{solvlintheorem}, a strengthening of the results of \cite{COSsolvable} stating that a solvable Lie group satisfying the conditions in the statement of \Cref{solvablecriterion} is actually Lie isomorphic to a definably linear group.

\medskip
	
	The following theorem of Malcev already tells us that a group satisfying \Cref{solvablecriterion} is actually \emph{linear} (it has a continuous faithful finite dimensional representation).
	
	\begin{fact}[\cite{Vinberg}, Theorem $7.1$]\label{semidirectproperty}
		Let $R$ be a solvable Lie group. Then $R$ admits a faithful finite-dimensional representation if and only if $R$ can be decomposed as a semidirect product $T\rtimes K$ where $T$ is simply connected and $K$ is a torus (maximal compact group).
	\end{fact}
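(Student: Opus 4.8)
The plan is to route the proof through a single reformulation: $R$ decomposes as $T\rtimes K$ with $T$ simply connected and $K$ a maximal torus \emph{if and only if} the closure $\overline{[R,R]}$ of the commutator subgroup is simply connected. One direction of this equivalence is immediate: if $R = T\rtimes K$ then, $K$ being abelian, $[R,R]\subseteq T$, so $\overline{[R,R]}$ is a closed connected subgroup of the simply connected solvable group $T$, hence simply connected. For the other, suppose $\overline{[R,R]}$ is simply connected; then $R/\overline{[R,R]}$ is a connected abelian Lie group $\mathbb R^a\times(S^1)^b$. Take $T$ to be the preimage of the vector part $\mathbb R^a$ and $K$ a maximal compact subgroup of $R$ (automatically a torus). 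A short computation --- using Mostow's theorem, that a maximal compact subgroup of $R$ maps onto one of $R/\overline{[R,R]}$, and that a simply connected solvable group has no nontrivial compact subgroup --- yields $R = T\rtimes K$, and $T$ is simply connected because it is an extension of $\mathbb R^a$ by the simply connected group $\overline{[R,R]}$. So it suffices to prove: $R$ admits a faithful finite-dimensional representation if and only if $\overline{[R,R]}$ is simply connected.

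For the forward implication, let $\rho\colon R\hookrightarrow GL(V)$ be faithful. By Lie's theorem $\rho(R)$ is conjugate over $\mathbb C$ into the upper triangular matrices, so every element of $\rho([R,R]) = [\rho(R),\rho(R)]$ is unipotent; by Kolchin's theorem $\rho([R,R])$ lies, after conjugation, in the unipotent upper triangular matrices, so its closure $L$ is a simply connected nilpotent Lie group. The restriction of $\rho$ to $\overline{[R,R]}$ is then a continuous injective homomorphism into $L$; its image is a connected Lie subgroup of the simply connected nilpotent group $L$, hence closed and simply connected, and such a homomorphism onto its image is an isomorphism. Therefore $\overline{[R,R]}$ is simply connected.

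For the converse I would use the decomposition $R = T\rtimes K$ provided by the reformulation. A simply connected solvable Lie group is linear (Malcev's theorem); fix a faithful representation $\rho_0\colon T\hookrightarrow GL(W)$. It then remains to combine this with the compact torus $K$: a semidirect product of a linear Lie group with a compact group acting on it is again linear --- one way to see this is to pass to the Zariski closure $H$ of $\rho_0(T)$, a solvable real algebraic group, note that the conjugation action of $K$ on $T$ extends to an algebraic action of $K$ on $H$, and embed $R = T\rtimes K$ into $H\rtimes K$. Hence $R$ is linear.

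The step I expect to be genuinely hard is the converse, and within it the linearity of simply connected solvable Lie groups: this is the one point where a substantive theorem (Malcev's) is needed rather than produced, its proof requiring Ado's theorem for $\operatorname{Lie}(T)$ together with a delicate verification that the discrete central kernel of the integrated representation is trivial. A secondary difficulty, once Malcev is granted, is checking that the semidirect product with the compact torus stays linear, i.e.\ that the $K$-action on the algebraic hull of $\rho_0(T)$ is indeed algebraic. The forward implication, by contrast, is comparatively light once Lie's and Kolchin's theorems and a little manifold topology are in hand.
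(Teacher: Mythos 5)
The paper does not actually prove this statement---it is imported from Vinberg as a black box---so there is no in-paper proof to compare against; the closest thing the paper contains is a definable-category analogue of the ``if'' direction, namely \Cref{solvlintheorem}, proved via the \nameref{extlemma}. Your reformulation through simple connectedness of $\overline{[R,R]}$ is correct and standard, and your forward implication (Lie's theorem, Kolchin's theorem, and the fact that connected Lie subgroups of a simply connected nilpotent group are closed and simply connected) is sound.

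The gap is in the converse, exactly at the point you downgraded to a ``secondary difficulty'': the assertion that the conjugation action of $K$ on $T$ extends to an algebraic action on the Zariski closure $H$ of $\rho_0(T)$ is not a verification to be carried out---it is false for a general faithful $\rho_0$. Take $T=\mathbb{R}^2$ and $K=SO(2)$ acting by rotations, so that $R=T\rtimes K$ is the Euclidean motion group (which is linear), and choose $\rho_0(x,y)=\operatorname{diag}(e^x,e^y)$. Then $H$ is a two-dimensional algebraic torus, whose group of algebraic automorphisms is the discrete group $GL_2(\mathbb{Z})$; any continuous homomorphism from the connected group $SO(2)$ into it is trivial, so the rotation action does not extend and $H\rtimes K$ cannot be formed. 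Nothing in your argument excludes such a choice of $\rho_0$. What is actually required is to pick $\rho_0$ satisfying a compatibility condition with the $K$-action---in Hochschild's formulation, that the semisimplification $\rho_0'$ kills every element $\alpha(x)x^{-1}$ for $\alpha$ ranging over the conjugations by $K$, which one arranges by making $\rho_0$ unipotent on the nilradical---and then to extend $\rho_0$ to all of $R$ by the representative-function machinery, finally summing with a faithful Peter--Weyl representation of $K$. That is precisely the route taken by \Cref{extlemma} and the proof of \Cref{solvlintheorem}; the shortcut through the algebraic hull does not replace it.
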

	
	The general approach is to prove \Cref{extlemma}, the ``\nameref{extlemma},'' and use it to extend definably a definable representation of the supersolvable part $T$ of $G$ which can be obtained using triangularity of  $T$. This is essentially an adaptation of the proof of the analogue result in the real Lie group context. Particularly, our proof of the Extension Lemma essentially adapts the proof in the last chapter of \cite{Hochschild} to the definable context.

\medskip
		
	Recall that if $\rho : G \rightarrow GL(V)$ is a representation of $G$ in a finite-dimensional vector space $V$ and $(0)=V_k \leq\dots\leq V_0=V$ is a composition series for the $G$-module $V$, then the direct sum of the $V_i/V_{i+1}$ is a semi-simple $G$-module $V'$ (two such $G$-modules differ by a $G$-isomorphism by \emph{Jordan-Hölder's Theorem}).

We denote by $\rho '$ the representation associated to $\rho$ over $V'$ and we say that $\rho$ is \emph{unipotent} if $\rho '$ is trivial (which is equivalent to $N=\{\rho (g)-\operatorname{Id}_V : g\in G\}$ being nilpotent).

Let us fix a group $G$. We will consider two objects that are closely related. First, the space of representative functions of $G$ defined as:
	\[\mathcal{R}(G)=\{ f \in \mathcal{C}^0(G) : \dim \big( \text{span} ( \{ g\cdot f \}_{g\in G} )\big) <\omega \}\]
	where $\mathcal{C}^0(G)$ is the space of continuous functions $f:G\rightarrow \mathbb{R}$.
	When the group $G$ is linear we can fix a representation $\rho$ of $G$ and define a second space of representative functions \textbf{associated to $\rho$} as the space
	\[S(\rho )=\{ \varphi \circ \rho : \varphi \in End(V)^{\star} \}.\]
Notice that $S(\rho )\subseteq \mathcal{R}(G)$.
	
\medskip

We need the following lemma to ensure that the representation we build (in the proof of \Cref{solvlintheorem}) is in a finite dimensional vector space.
	
	\begin{fact}[{\cite[Chap.~XVIII, Lemma~2.1]{Hochschild}}] \label{lemmafindim}
		Let $G$ be a solvable Lie group and $\rho : G \rightarrow GL(V)$ be a faithful continuous representation in the finite-dimensional $\mathbb{R}$-vector space $V$. Let $A$ be a set of automorphisms of $G$ such that $\rho ' (\alpha (x) x^{-1})=\operatorname{Id}$ for all $x\in G$ and $\alpha \in A$. Then if $f\in S(\rho )$ is a representative function associated to $\rho$ then the vector space generated by $\{f\circ \alpha \}_{\alpha \in A}$ is finite-dimensional.
	\end{fact}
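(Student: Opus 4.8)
The plan is to fix a single $f=\varphi\circ\rho\in S(\rho)$ and to control the matrix coefficients of the twisted representations $\rho_\alpha:=\rho\circ\alpha$ ($\alpha\in A$) uniformly in $\alpha$, since $f\circ\alpha=\varphi\circ\rho_\alpha$ is an $\mathbb R$-linear combination of entries of $\rho_\alpha$. Fix a composition series $(0)=V_k\le\cdots\le V_0=V$ for $\rho$ and a basis of $V$ adapted to it. Because $\alpha$ is an automorphism of $G$ we have $\rho_\alpha(G)=\rho(\alpha(G))=\rho(G)$ as subsets of $GL(V)$, so each $V_i$ is $\rho_\alpha$-invariant and the factors $V_i/V_{i+1}$ stay simple for $\rho_\alpha$; hence every $\rho_\alpha$ is block upper triangular in this one fixed basis. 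Write $\rho_\alpha=(\rho_\alpha^{ij})_{0\le i\le j\le k-1}$ for the blocks, $\rho_\alpha^{ij}$ being a $\operatorname{Hom}(V_j/V_{j+1},V_i/V_{i+1})$-valued function on $G$. The one and only place the hypothesis enters is for the diagonal blocks: $\rho_\alpha^{ii}=\rho'_i\circ\alpha$, and since the composition factor $\rho'_i$ satisfies $\rho'_i(\alpha(x)x^{-1})=\operatorname{Id}$, it satisfies $\rho'_i\circ\alpha=\rho'_i$; so the diagonal blocks are independent of $\alpha$.

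I would then show, by induction on the codiagonal distance $j-i$, that $\{\rho_\alpha^{ij}:\alpha\in A\}$ spans a finite-dimensional space of functions on $G$; the case $j=i$ is the previous paragraph. For the step, expanding $\rho_\alpha(xy)=\rho_\alpha(x)\rho_\alpha(y)$ in the $(i,j)$ block and isolating the two extreme summands gives
\[
\rho_\alpha^{ij}(xy)=\rho'_i(x)\,\rho_\alpha^{ij}(y)+\rho_\alpha^{ij}(x)\,\rho'_j(y)+\sum_{i<l<j}\rho_\alpha^{il}(x)\,\rho_\alpha^{lj}(y).
\]
Putting $b_\alpha(x)=\rho_\alpha^{ij}(x)\,\rho'_j(x)^{-1}$ turns this into $\delta b_\alpha=I_\alpha$, where $\delta$ is the (linear) coboundary operator for continuous functions on $G$ valued in the finite-dimensional $G$-module $\operatorname{Hom}(V_j/V_{j+1},V_i/V_{i+1})$, and $I_\alpha$ is a fixed bilinear expression in the blocks $\rho_\alpha^{il},\rho_\alpha^{lj}$ with $i<l<j$ — hence, by the induction hypothesis and the fact that the product of functions drawn from two fixed finite-dimensional spaces again spans a fixed finite-dimensional space, $I_\alpha$ ranges over a fixed finite-dimensional space $E_{ij}$ of functions on $G\times G$ as $\alpha$ varies. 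Now $\ker\delta=Z^1_{\mathrm{cont}}(G,\operatorname{Hom}(V_j/V_{j+1},V_i/V_{i+1}))$ is finite-dimensional: a continuous $1$-cocycle of a connected Lie group is determined by its differential at the identity, which lies in the finite-dimensional space $\{\ell\in\operatorname{Hom}(\mathfrak g,W):\ell[X,Y]=X\ell(Y)-Y\ell(X)\}$ with $W=\operatorname{Hom}(V_j/V_{j+1},V_i/V_{i+1})$. Therefore $\delta^{-1}(E_{ij})$ is finite-dimensional, all $b_\alpha$ lie in it, and $\rho_\alpha^{ij}=b_\alpha\cdot\rho'_j$ then lies in a fixed finite-dimensional space. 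Summing over the finitely many blocks shows $\operatorname{span}\{\varphi\circ\rho_\alpha:\alpha\in A\}$ is finite-dimensional, which is the claim.

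The substantive work is entirely in organizing this induction: one has to check that the recursion for the $(i,j)$ block calls only on strictly-closer-to-diagonal blocks (clear from the display) and that the inhomogeneous terms therefore stay inside one fixed finite-dimensional space of functions; the rest rests only on the standard finiteness of $Z^1$ with finite-dimensional coefficients. I would expect this bookkeeping — making the finite-dimensionality of $E_{ij}$ genuinely uniform in $\alpha$ — to be the one point needing care. It is perhaps worth remarking that the argument nowhere uses that $G$ is solvable; solvability is what makes the lemma applicable in \Cref{sec2}, through \Cref{semidirectproperty}.
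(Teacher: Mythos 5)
The paper gives no proof of \Cref{lemmafindim}; it is imported verbatim from Hochschild, so there is no internal argument to compare yours against. Your proof is, as far as I can check, correct and self-contained, and it is the classical one in spirit: block-triangularize all the twisted representations $\rho\circ\alpha$ simultaneously with respect to one fixed composition series (legitimate since $\rho(\alpha(G))=\rho(G)$ as sets of operators), note that the hypothesis $\rho'(\alpha(x)x^{-1})=\operatorname{Id}$ freezes the diagonal blocks, and control the off-diagonal blocks by induction on $j-i$ using the finite-dimensionality of continuous $Z^1$. Two points deserve an explicit sentence in a written-up version. First, after multiplying by $\rho'_j(xy)^{-1}$ the inhomogeneous term is $\sum_{i<l<j}\rho_\alpha^{il}(x)\,\rho_\alpha^{lj}(y)\,\rho'_j(y)^{-1}\rho'_j(x)^{-1}$ rather than the bare bilinear expression you name; the extra factors are $\alpha$-independent, so $I_\alpha$ still ranges over a fixed finite-dimensional space and nothing breaks, but the identity $\delta b_\alpha=I_\alpha$ should carry them. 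Second, the finiteness of $Z^1_{\mathrm{cont}}(G,W)$ via the differential at the identity (equivalently, via the continuous, hence smooth, homomorphism $x\mapsto(b(x),\pi(x))$ into $W\rtimes GL(W)$) requires $G$ connected; this matches Hochschild's convention that his groups are analytic, and in the paper's actual application of the fact inside \Cref{extlemma} the group in question is definable, hence has finitely many components, for which the same bound holds after recording the finitely many values of $b$ on coset representatives. Your closing remark is also accurate: solvability plays no role in the argument itself, only in how the lemma is deployed in \Cref{sec2}.
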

	
	The following lemma is the main brick in the proof of \Cref{solvlintheorem}.
	
	\begin{lemma}[Extension Lemma]
		\label{extlemma}
		Let $G=K\ltimes H$ be a group definable in an $o$-minimal expansion of the reals with $H$ and $K$ definable subgroups, $H$ normal and solvable. Suppose that $H$ admits a  faithful definable representation $\rho : H \rightarrow GL(V)$ where $V$ is a finite-dimensional vector space over $\mathbb{R}$. Suppose moreover that $\rho$ satisfies that for all $x\in G$ and $y\in H$ we have $\rho '(xyx^{-1}y^{-1})=id$. Then there is a definable representation $\sigma$ of $G$ that is faithful on $H$ and extends $\rho$.
	\end{lemma}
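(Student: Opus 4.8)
The plan is to adapt the classical construction (as in Hochschild, Chapter XVIII) of extending a representation of a normal subgroup to a semidirect product, and to check at each step that everything can be carried out definably. Starting from the faithful definable representation $\rho: H \to GL(V)$, consider the space $S(\rho)$ of representative functions of $H$ associated to $\rho$. The group $K$ acts on $H$ by conjugation (since $G = K \ltimes H$), hence on functions on $H$, and the hypothesis $\rho'(xyx^{-1}y^{-1}) = \operatorname{id}$ for all $x \in G$, $y \in H$ says precisely that the automorphisms $\alpha_k : y \mapsto kyk^{-1}$ satisfy the condition of \Cref{lemmafindim} (with $A = \{\alpha_k : k \in K\}$). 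Therefore, for each $f \in S(\rho)$ the span of $\{f \circ \alpha_k\}_{k \in K}$ is finite-dimensional; summing over a spanning set of $S(\rho)$, we obtain a finite-dimensional subspace $W \subseteq \mathcal{R}(H)$ that contains $S(\rho)$ and is invariant under the $K$-action. I would first verify that $W$, or rather the relevant linear maps below, are definable — this uses that $S(\rho)$ is cut out definably from $End(V)^\star$ and that the $K$-action $k \mapsto (f \mapsto f\circ\alpha_k)$ is a definable family of linear maps, so definable choice / uniform finiteness in the $o$-minimal structure gives a definable finite-dimensional $W$.

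Next I would reconstruct $\rho$ (up to the natural isomorphism) from the action of $H$ on $W$ by right translation: the evaluation-type pairing identifies $V$ (or a space $H$-isomorphic to it carrying a faithful action) with a subquotient built from $W$, in a definable way, so without loss of generality we may assume $V \subseteq W$ with $H$ acting by right translations restricted to $W$. Now define $\sigma$ on $G$ as follows: $H$ acts on $W$ by right translation, and $K$ acts on $W$ by $k \cdot f = f \circ \alpha_k$ (precomposition with conjugation); one checks the cocycle/compatibility identity $\sigma(k)\sigma(y)\sigma(k)^{-1} = \sigma(kyk^{-1})$ holds on $W$ because right translation by $kyk^{-1}$ equals conjugation-twisted right translation by $y$ — this is a direct computation with functions on $H$. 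This yields a genuine representation $\sigma: G \to GL(W)$ whose restriction to $H$ contains $\rho$ as a subrepresentation (since $V \subseteq W$ is $H$-invariant), hence is faithful on $H$; and $\sigma$ is definable because all the ingredients (right translation action, the definable family $\alpha_k$, the subspace $W$) are definable. Restricting/projecting $\sigma$ appropriately if one wants it valued exactly in $GL(V)$-sized data is a cosmetic adjustment.

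The main obstacle I expect is the definability of $W$ and of the identification of $V$ inside it: \Cref{lemmafindim} is quoted as a real-Lie-group statement, and its proof (bounding the dimension via the unipotent/semisimple decomposition of $\rho$) must be shown to produce a bound and a basis that are definable and uniform in $k \in K$. This is where one genuinely uses the $o$-minimal setting: uniform finiteness guarantees a global bound $N$ on $\dim \operatorname{span}\{f\circ\alpha_k\}$ as $f$ ranges over a definable spanning set, and definable choice lets us select a definable frame for $W$. A secondary technical point is checking that the right-translation action of $H$ on the finite-dimensional space $W$ is definable as a map $H \to GL(W)$ — this follows from definability of the group operation and of $W$, but should be stated carefully. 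Once definability of $W$ is in hand, the algebraic verification that $\sigma$ is a homomorphism extending $\rho$ is routine and purely formal.
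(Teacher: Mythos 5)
Your proposal follows essentially the same route as the paper's proof: the right-translation action of $H$ on its representative functions, \Cref{lemmafindim} applied to the conjugation automorphisms coming from $K$ (which is exactly where the hypothesis $\rho'(xyx^{-1}y^{-1})=\operatorname{id}$ enters) to get a finite-dimensional $G$-invariant subspace generated by $S(\rho)$, and faithfulness on $H$ inherited from faithfulness of $\rho$ via $S(\rho)$. The one place you make the argument look harder than it is, is definability of that subspace: no uniform finiteness or definable choice is required, since once abstract finite-dimensionality is known one simply fixes a basis consisting of finitely many functions of the form $k_i\cdot(\varphi_i\circ\rho)$, each of which is definable outright, which is exactly how the paper concludes.
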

	
	\begin{proof} Let us consider the following action of $H$ on the space $\mathcal{C}^0(H)$ of continuous functions from $H$ to $\mathbb{R}$, for $h\in H , f\in \mathcal{C}^0(H)$ we define $h\cdot f : x\mapsto f(xh)$.
		This action is faithful.
		
		We can extend the action to $G$ as follows: For any $ h \in H,\ k \in K$ with $g=kh$ and $f\in \mathcal{C}^0(H)$ define
		\[g\cdot f : x\mapsto f(k^{-1}xkh). \]
		
		Notice that we had to consider the right action of $K$ on $H$ to get a left action of $G$ on $\mathcal{C}^0(H)$. Extending the action to $G$ preserves faithfulness on $H$.
		
		$H$ acts on  its space of representative functions $\mathcal{R}(H)$. We are going to show that not only $H$ but all of $G$ acts on $\mathcal{R}(H)$. Indeed, for $f\in \mathcal{R}(H)$, $h'\in H$ and $g=kh\in G$ with $h\in H$ and $k\in K$ then
	\[ h'\cdot (g \cdot f)= (h'kh)\cdot f=(kk^{-1}h'kh))\cdot f=k\cdot ((k^{-1}h'kh)\cdot f) \in k\cdot (H\cdot f).\]
As $(H\cdot f)$ is finite-dimensional, $hk\cdot f \in \mathcal{R}(H)$.
		
\medskip

		We now use \Cref{lemmafindim} to find a finite dimensional subspace $U$ of $\mathcal{R}(H)$ on which $H$ acts faithfully. Let $A$ be the set of automorphisms $c_k:x\mapsto k^{-1}xk$  of $H$ given by the conjugations in $H$ by $k\in K$. For all $f\in \mathcal{C}^0(H)$ we have $k\cdot f=f \circ c_k$; fix $f\in S(\rho )$. If we take $H$ instead of $G$, all of the hypothesis of \Cref{lemmafindim} are fulfilled, so that the vector space generated by the $ k\cdot f=f\circ c_k $ for $k \in K$ is finite-dimensional. But we know that $S(\rho )$ is finite dimensional (it has the same dimension as $End(V)^{\star}$), so the vector subspace $U\leq \mathcal{R} (H)$ generated by $G\cdot S(\rho )$ has finite dimension. $G$ acts on this space $U$, and as noticed earlier, the restriction of the action of $H$ is faithful. If $(f_1, f_2, \dots , f_k)$ is a basis for $U$, by definition of $U$ each $f_i=k_i \cdot (\varphi_i\circ\rho)$ for some $k_i\in K$ and $\varphi_i \in End(V)^{\star}$. Since these functions are definable and $\rho$ is also definable we get a finite-dimensional definable representation of $G$. \end{proof}

\medskip
	
	We now apply this \nameref{extlemma} to the decomposition of a solvable Lie group into its supersolvable and compact parts. In order to do so we need to find a representation of the supersolvable part that is unipotent (by which we mean upper-triangular with $1$'s on the diagonal).
	
	\begin{proposition}\label{unipotonnilp}\label{supersolvablerep}
		Let $G$ be a simply connected, connected, supersolvable Lie group and $N$ its nilradical (i.e.~its maximal normal nilpotent subgroup). Then $G$ is Lie isomorphic to a definably linear group $G_1$ whose nilradical $N_1$ is unipotent.
	\end{proposition}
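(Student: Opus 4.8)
The plan is to build directly a faithful representation $\rho\colon G\to GL(V)$ that is triangular over $\mathbb{R}$ and that sends the nilradical into the unipotent upper-triangular matrices, and then to observe that its image, equipped with matrix multiplication and with the tautological linear action on $V$, is definable in $\mathbb{R}_{\exp}$. The point is that both requirements in the statement (definable linearity, and a \emph{unipotent} nilradical) will come out of the same triangular representation, the second one essentially for free.

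For the construction I would start from the refined form of Ado's theorem (Ado--Iwasawa; see \cite{Hochschild}): the Lie algebra $\mathfrak{g}=\operatorname{Lie}(G)$ has a faithful finite-dimensional representation $\tau\colon\mathfrak{g}\to\mathfrak{gl}(V)$ in which the nilradical $\mathfrak{n}$ acts by nilpotent operators. Since $G$ is supersolvable, $\mathfrak{g}$ is supersolvable, so by Lie's theorem over $\mathbb{R}$ we may choose a basis of $V$ in which $\tau(\mathfrak{g})$ consists of upper-triangular matrices with real diagonal entries; an upper-triangular nilpotent matrix has zero diagonal, hence $\tau(\mathfrak{n})$ then consists of \emph{strictly} upper-triangular matrices. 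As $G$ is simply connected, $\tau$ integrates to a smooth homomorphism $\rho\colon G\to GL(V)$ with $\rho_\ast=\tau$ and $\rho\circ\exp_G=\exp\circ\tau$, where $\exp$ is the matrix exponential.

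Faithfulness is where the hypotheses really enter. Since $G$ is supersolvable and simply connected it is exponential, so $\exp_G\colon\mathfrak{g}\to G$ is a diffeomorphism; and $\exp$ is injective on the set of upper-triangular matrices with real diagonal, because such a $Y$ is recovered from $\exp(Y)$ by the principal matrix logarithm (the eigenvalues of $\exp(Y)$ being positive reals). Hence $\rho=\exp\circ\tau\circ(\exp_G)^{-1}$ is a composite of injections, so $\rho$ is a Lie isomorphism of $G$ onto $G_1:=\rho(G)\subseteq GL(V)$; it carries the nilradical $N$ onto the nilradical $N_1$ of $G_1$, and $N_1=\rho(N)=\exp(\tau(\mathfrak{n}))$ consists of unipotent matrices. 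For definability, use again that $\exp_G$ is onto, so $G_1=\exp(\mathfrak{g}_1)$ with $\mathfrak{g}_1:=\tau(\mathfrak{g})$ a linear subspace of real upper-triangular matrices. For $Y\in\mathfrak{g}_1$ with diagonal $(\lambda_1,\dots,\lambda_n)$, the entries of $\exp(Y)$ are polynomials in the entries of $Y$ and in the divided differences of the real exponential at the $\lambda_i$ (for instance $(e^{a}-e^{b})/(a-b)$, together with its continuous extension to $a=b$), each of which is definable in $\mathbb{R}_{\exp}$ (take the topological closure of a quantifier-free graph). So $\exp|_{\mathfrak{g}_1}$ is definable in $\mathbb{R}_{\exp}$, $G_1$ is an $\mathbb{R}_{\exp}$-definable subset of $GL(V)$, and since matrix multiplication and the action of $GL(V)$ on $V$ are polynomial, $G_1$ is a definably linear group; by the above its nilradical $N_1$ is unipotent.

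The main obstacle is the faithfulness step, i.e.\ upgrading the faithful Lie algebra representation $\tau$ to a faithful group representation: a priori $\ker\rho$ is only a discrete central subgroup, and ruling it out is exactly where "simply connected $+$ supersolvable" is used, via the diffeomorphism property of $\exp_G$ together with injectivity of $\exp$ on real triangular matrices. A secondary point needing care is the $\mathbb{R}_{\exp}$-definability of the matrix exponential on real upper-triangular matrices; this too rests on supersolvability, which forces the relevant eigenvalues to be real and so keeps the formulas within reach of the real exponential rather than dragging in trigonometric functions (as happens with the non-definable presentation in the introduction). If one preferred to avoid invoking that $\exp_G$ is a diffeomorphism, faithfulness can instead be obtained by first checking $\rho$ is faithful on $N$ (the classical simply connected nilpotent case) and then augmenting $\rho$ by the representation $G\to G/N\cong\mathbb{R}^k\to(\mathbb{R}_{>0})^k\hookrightarrow GL_k(\mathbb{R})$ coming from coordinatewise exponentiation, which separates points modulo $N$; the definability argument for the augmented image is identical.
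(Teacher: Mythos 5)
Your proof follows essentially the same route as the paper's: take a faithful upper-triangular representation of the supersolvable Lie algebra in which the nilradical acts by strictly upper-triangular (nilpotent) operators, exponentiate, and use that $\exp_G$ is a diffeomorphism to obtain a faithful, $\mathbb{R}_{\exp}$-definable linear copy $G_1$ with unipotent nilradical. You supply more detail than the paper does at a few points (the Ado--Iwasawa refinement guaranteeing nilpotent action of $\mathfrak{n}$, injectivity of the matrix exponential on real-spectrum triangular matrices for faithfulness, and the divided-difference formulas giving $\mathbb{R}_{\exp}$-definability of $\exp$ on $\tau(\mathfrak{g})$), but the underlying argument is the same.
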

	
	\begin{proof}	We first notice that since $G$ is simply connected and solvable, the exponential map gives us a diffeomorphism between $G$ and its Lie algebra $\mathfrak{g}$. $G$ is supersolvable, so $\mathfrak{g}$ is also supersolvable, and supersolvable Lie algebras have upper triangular representations (see \cite[Lemma $3.1$]{COSsolvable}). In any such representation $\mathfrak{n}$ is an upper triangular nilpotent subalgebra, so it must be strictly upper triangular. Then the matrix image of the exponential will be a linear Lie group $G_1$ Lie-isomorphic to $G$ whose nilradical $N_1=\exp (\mathfrak{n})$ is unipotent, as required. \end{proof}
	
	Recall the following:
	
		\begin{fact}[\cite{Vinberg}, Theorem $3.4$] \label{solvabletorsionfree}
			Let $G$ be a connected solvable Lie group of dimension $n$. The following are equivalent:
				\begin{itemize}
					\item $G$ is torsion-free,
					\item $G$ is simply connected,
					\item $G$ is diffeomorphic to $\mathbb{R}^n$.
				\end{itemize}
		\end{fact}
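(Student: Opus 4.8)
The plan is to obtain all three equivalences from the Iwasawa--Malcev structure theorem for connected Lie groups: such a $G$ has a maximal compact subgroup $K$, every maximal compact subgroup is connected, any two are conjugate, every compact subgroup lies in one of them, and $G$ is diffeomorphic to $K\times\mathbb{R}^{d}$ with $d=\dim G-\dim K$. The solvability of $G$ enters at exactly one point: $K$ is then a compact connected solvable Lie group, so $Lie(K)$ is both solvable and reductive, hence abelian, whence $K$ is a torus $\mathbb{T}^{k}$. After that the argument uses only the trivialities that $\pi_{1}(\mathbb{T}^{k})\cong\mathbb{Z}^{k}$ and that for a torus the conditions ``$k=0$'', ``$\mathbb{T}^{k}$ has no element of finite order $>1$'' and ``$\mathbb{T}^{k}$ is simply connected'' coincide.

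The key observation is that each of the three conditions on $G$ is equivalent to $K=\{1\}$. If $K=\{1\}$, then $G$ is diffeomorphic to $\mathbb{R}^{d}=\mathbb{R}^{n}$ and hence simply connected, and since every compact subgroup of $G$ is conjugate into $K$, the group $G$ has no nontrivial compact subgroup and in particular no element of finite order $>1$. Conversely, if $K\neq\{1\}$, that is $k\geq 1$, then $\pi_{1}(G)\cong\pi_{1}(K)\cong\mathbb{Z}^{k}\neq 0$, so $G$ is neither simply connected nor diffeomorphic to $\mathbb{R}^{n}$, and $\mathbb{T}^{k}\hookrightarrow G$ provides torsion. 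Combining the two directions, torsion-free $\Leftrightarrow$ $K=\{1\}$ $\Leftrightarrow$ simply connected $\Leftrightarrow$ diffeomorphic to $\mathbb{R}^{n}$, which is the assertion.

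If one prefers an argument internal to the theory of solvable groups, an alternative is induction on $\dim G$: choose a hyperplane $\mathfrak{h}\subseteq\mathfrak{g}$ containing $[\mathfrak{g},\mathfrak{g}]$ (automatically an ideal), let $H\leq G$ be the corresponding analytic subgroup, and invoke the classical fact that analytic subgroups of a simply connected solvable Lie group are closed and simply connected. When $G$ is simply connected this makes $G\to G/H\cong\mathbb{R}$ a principal bundle over a contractible base with simply connected fibre, so $G\cong H\times\mathbb{R}$ and induction yields $G\cong\mathbb{R}^{n}$; the implication from ``diffeomorphic to $\mathbb{R}^{n}$'' to ``torsion-free'' is as above (a nontrivial finite subgroup acting freely by translation on a contractible finite-dimensional manifold would produce a finite-dimensional $K(\mathbb{Z}/m\mathbb{Z},1)$); and for ``torsion-free'' $\Rightarrow$ ``simply connected'' one passes to the universal cover $\widetilde G$, notes that $\Gamma:=\pi_{1}(G)$ is a nontrivial discrete central subgroup $\cong\mathbb{Z}^{r}$, and produces a torsion element of $\widetilde G/\Gamma=G$ by exhibiting an element of $\Gamma$ that is properly divisible in $\widetilde G$.

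I expect the only genuine obstacle to be locating and citing the right classical structure theorem --- either Iwasawa--Malcev in the form $G\approx K\times\mathbb{R}^{d}$ with $K$ connected, or, along the second route, the closedness and simple connectedness of analytic subgroups of simply connected solvable Lie groups. Granting either, the three equivalences are formal, and the hypothesis of solvability is used only to force the maximal compact subgroup to be a torus.
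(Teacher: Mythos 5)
The paper does not actually prove this statement: it is imported verbatim from Vinberg as a cited Fact and used as a black box, so there is no in-paper argument to compare yours against. Judged on its own, your main argument is correct and complete modulo the Cartan--Iwasawa--Malcev theorem you invoke. Reducing all three conditions to the single condition $K=\{1\}$ on a maximal compact subgroup is a clean route: solvability enters only to force $K$ to be a torus (its Lie algebra is reductive and solvable, hence abelian), finite subgroups are compact and hence conjugate into $K$, and $\pi_1(G)\cong\pi_1(K)\cong\mathbb{Z}^k$ kills both simple connectedness and contractibility when $k\geq 1$; each of these steps is sound, and together they give all the equivalences at once. The only soft spot is in your optional second route: the claim that a nontrivial element of $\Gamma=\pi_1(G)\subseteq\widetilde G$ is ``properly divisible'' in $\widetilde G$ is not automatic, because the exponential map of a simply connected solvable Lie group need not be surjective (the universal cover of the Euclidean motion group is the standard example), so a central element need not visibly lie on a one-parameter subgroup; that step would need its own argument, whereas the first route already delivers torsion-free $\Rightarrow$ simply connected without it. If you want the statement to be self-contained at the level of this paper, the Iwasawa--Malcev version is the one to keep.
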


We are now ready to prove:
	
 	\solvlintheorem*
	
	\begin{proof} Let $G_1$ be the definable Lie group isomorphic to $G$ given by \Cref{solvablecriterion}, so that in particular $G_1$ is a  definable semidirect product of a supersolvable subgroup $H_1$ and a compact group $K_1$. Take the definable representation of $H_1$ given by \Cref{supersolvablerep} (it is simply connected because solvable and torsion-free as in \Cref{solvabletorsionfree}).

\medskip

In order to apply the \nameref{extlemma} we have to check that this representation satisfies the commutator condition, and this can be checked on its Lie algebra. Since $[\mathfrak{g}_1,\mathfrak{h}_1]$ is nilpotent (the commutator algebra of a solvable Lie algebra is nilpotent) it is included in $\mathfrak{n}=Lie(N)$ (where $N$ is the nilradical of $H_1$). But the representation we chose was unipotent on $N$ so it automatically satisfies the commutator condition. Applying the \nameref{extlemma} we get a definable representation $\rho$ of $G_1$ which is faithful on $H_1$.

\medskip

Let $\mu$ be any faithful continuous representation of $K_1$, which exists by the Peter-Weyl Theorem. Any faithful representation of a compact group is algebraic\footnote{This appears to be a well known fact but we could not find a reference, so present a proof in \autoref{appendixA}} so $\mu(K_1)$ is algebraic, and hence definable.

\medskip

The direct sum of $\rho$ and $\mu$ will be a definable and faithful representation of $G_1$, as required.
\end{proof}

	\section{From linearity to definability}\label{sec3}
	
	The main idea behind our result that extends the solvable case to the general linear case is to use the so-called \emph{Levi decomposition} of a linear Lie group $G$:
	
	\begin{fact}[Levi Decomposition, \cite{Levidec}, Theorem $1$]\label{levidecomposition}
		Let $G$ be a connected Lie group and $R$ its solvable radical (i.e.~its maximal solvable normal and connected Lie subgroup). There is a unique (up to conjugacy) maximal connected and semisimple subgroup $S$ of $G$ such that $G=RS$ and $\text{dim}(R\cap S)=0$.
		If the center $Z$ of $S$ is finite (which is the case whenever $G$ is linear) or if $G$ is simply connected then $G$ is an almost semidirect product of $R$ and $S$: $G=R(\rtimes)S$.
	\end{fact}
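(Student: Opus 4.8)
The plan is to reduce everything to the infinitesimal Levi--Malcev theorem for $\mathfrak g=\operatorname{Lie}(G)$ and then integrate. Write $\mathfrak r$ for the radical of $\mathfrak g$ (so $\mathfrak r=\operatorname{Lie}(R)$) and $\mathfrak n$ for its nilradical. Stage one is the classical statement that there is a semisimple subalgebra $\mathfrak s\le\mathfrak g$ with $\mathfrak g=\mathfrak r\oplus\mathfrak s$ as vector spaces (a \emph{Levi subalgebra}), and that any two such are conjugate by an automorphism $\exp(\operatorname{ad}x)$ with $x\in\mathfrak n$. I would simply invoke this: existence is proved by induction on the derived length of $\mathfrak r$, reducing to the case $[\mathfrak r,\mathfrak r]$ abelian and then exhibiting the complement as the kernel of a suitable $1$-cocycle, the key input being the vanishing of $H^1$ and $H^2$ of a semisimple Lie algebra with coefficients in a finite-dimensional module (Whitehead's lemmas); uniqueness up to conjugacy uses the $H^1$-vanishing in the same way.

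Stage two is to integrate. Let $S$ be the connected (a priori only immersed) Lie subgroup of $G$ with $\operatorname{Lie}(S)=\mathfrak s$; it is semisimple. Because $\mathfrak r+\mathfrak s=\mathfrak g$, the multiplication map $R\times S\to G$ is a submersion at the identity, so $RS$ contains a neighbourhood of $e$; since $R$ is normal, $RS$ is a subgroup, hence an open and therefore closed subgroup of the connected group $G$, whence $G=RS$. The intersection $R\cap S$ is a closed subgroup of $S$ (closedness of $R$ pulls back along the inclusion $S\hookrightarrow G$ in the intrinsic topology of $S$), and its Lie algebra is $\mathfrak r\cap\mathfrak s=\{0\}$, so $R\cap S$ is discrete; in particular $\dim(R\cap S)=0$. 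Being normal in $G$, the discrete group $R\cap S$ is normal in the connected group $S$, hence central: $R\cap S\le Z(S)$. Maximality of $S$ among connected semisimple subgroups is infinitesimal: if $S'$ is connected semisimple then $\operatorname{Lie}(S')\cap\mathfrak r$ is a solvable ideal of a semisimple algebra, so it is $\{0\}$ and $\operatorname{Lie}(S')$ embeds into $\mathfrak g/\mathfrak r\cong\mathfrak s$; thus $\dim S'\le\dim S$, and $S\subseteq S'$ forces equality of Lie algebras and hence $S=S'$. Likewise, uniqueness of $S$ up to conjugacy in $G$ follows from that of $\mathfrak s$, conjugation by $\exp(\operatorname{ad}x)$ for $x\in\mathfrak n$ being $\operatorname{Ad}(\exp x)$ with $\exp x$ in the nilradical of $R$.

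Stage three is the last sentence of the statement. Since $R\cap S\le Z(S)$, if $Z(S)$ is finite then $R\cap S$ is finite, which is precisely the assertion that $G=RS$ with $R\trianglelefteq G$ is an almost semidirect product $R(\rtimes)S$; and when $G$ is linear the subgroup $S$ is linear as well (restrict a faithful representation to $S$), so $Z(S)$ is finite because a connected semisimple matrix group has finite centre. If instead $G$ is simply connected, one uses the stronger classical fact that a simply connected Lie group is a genuine semidirect product of its (closed, simply connected) radical and a simply connected semisimple Levi subgroup, so in that case $R\cap S=\{e\}$.

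The only genuinely hard step is Stage one, the cohomological proof of Levi--Malcev, which I would not reprove. The one subtlety worth flagging in Stage two is that $R\cap S$ really is a Lie subgroup of $S$ — this is what lets ``Lie algebra zero'' conclude ``discrete'' — and it holds because $R$ is closed in $G$, so $R\cap S$ is closed in the intrinsic topology of the immersed subgroup $S$; everything else is routine manipulation of the radical and of the subalgebra/subgroup correspondence.
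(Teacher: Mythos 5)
This statement appears in the paper only as a quoted \textbf{Fact}, attributed to \cite{Levidec}; the authors give no proof of it, so there is no in-paper argument to compare yours against. Your proposal is the standard Levi--Malcev route (algebra-level decomposition via Whitehead's lemmas, integration to the group, then the centrality of $R\cap S$) and is essentially correct as a proof of the classical theorem. Two small points are worth tightening. First, $R\cap S$ is in general \emph{not} normal in $G$ (conjugation by elements of $R$ need not preserve $S$); what you actually use, and what is true, is that $R\trianglelefteq G$ forces $s(R\cap S)s^{-1}=R\cap sSs^{-1}\cap\dots=R\cap S$ for $s\in S$, i.e.\ normality in $S$, which together with discreteness and connectedness of $S$ gives $R\cap S\le Z(S)$. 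Second, if one reads ``maximal connected semisimple subgroup'' without the side conditions $G=RS$ and $\dim(R\cap S)=0$, then conjugacy of Levi subalgebras alone does not quite finish the uniqueness claim: you also need Malcev's theorem that every semisimple subalgebra of $\mathfrak g$ is contained in a Levi subalgebra, so that maximal semisimple subalgebras are exactly the Levi subalgebras. (If instead the side conditions are part of the definition of the subgroups being compared, a dimension count shows their Lie algebras are complements to $\mathfrak r$, hence Levi subalgebras, and your argument is complete.) Neither issue affects the way the Fact is used in the paper.
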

	
	We remind the reader of the definition of almost semidirect product. Let $G$ be a group, $H\unlhd G$ a normal subgroup and $K\leq G$ a subgroup. Then $G$ is an almost semidirect product of $H$ and $K$ if $G=HK$ and $H\cap K$ is finite.

All the results on the solvable case will articulate well with the rest of the group since the definable solvable radical and solvable radical coincide for definable groups:
	
		\begin{fact}[\cite{BJOcom}, Lemma $4.5$]
		Let $G$ be a definable group and $R(G)$ the group generated by all the normal and solvable subgroups of $G$ (not only the definable ones). Then $R(G)$ is a normal, definable and solvable subgroup of $G$.
		\end{fact}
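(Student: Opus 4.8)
Although this is quoted from \cite{BJOcom}, here is the approach I would take to prove it. The plan is: (i) build a ``definable hull'' for arbitrary subgroups using the descending chain condition; (ii) show that the definable hull of a normal solvable subgroup is again normal and solvable, via a commutator estimate; (iii) split off a connected definable solvable radical $R_0$ by a dimension argument; and (iv) bound what is left by a finite counting argument.

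For (i): recall that a group definable in an $o$-minimal structure satisfies the descending chain condition on definable subgroups (this is in \cite{Pgroupchunk}; a strictly descending chain eventually has constant dimension, hence eventually constant identity component, after which the finite quotients by that common identity component strictly descend, which is impossible). Hence for \emph{any} subgroup $H\leq G$ the intersection $\widetilde H$ of all definable subgroups of $G$ containing $H$ equals a finite subintersection, so it is the smallest definable subgroup of $G$ containing $H$; and if $H\trianglelefteq G$ then each conjugate $g\widetilde H g^{-1}$ is a definable subgroup containing $H$, so minimality forces $\widetilde H\trianglelefteq G$.

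For (ii): I would prove that for $A,B\trianglelefteq G$ one has $[\widetilde A,\widetilde B]\subseteq\widetilde{[A,B]}$. Since $[A,B]\trianglelefteq G$, its hull $\widetilde{[A,B]}$ is normal; then for fixed $b\in B$ the set $\{x\in G:[x,b]\in\widetilde{[A,B]}\}$ is a \emph{definable subgroup} — the identity $[x_1x_2,b]=x_1[x_2,b]x_1^{-1}\,[x_1,b]$ together with normality of $\widetilde{[A,B]}$ makes it closed under products, and similarly under inverses — and it contains $A$, hence contains $\widetilde A$. Repeating in the other variable gives $[\widetilde A,\widetilde B]\subseteq\widetilde{[A,B]}$. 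Now if $N\trianglelefteq G$ is solvable, every term $N^{(i)}$ of its derived series is characteristic in $N$, hence normal in $G$, and induction on $i$ using the estimate gives $(\widetilde N)^{(i)}\subseteq\widetilde{N^{(i)}}$; taking $i$ equal to the derived length of $N$ shows $\widetilde N$ is solvable. So the definable hull of a normal solvable subgroup is a definable, normal, solvable subgroup.

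For (iii)--(iv): let $R_0$ be the maximal definable connected normal solvable subgroup of $G$ — it exists and is unique because this family of subgroups is directed under products and of bounded dimension, and a connected definable subgroup of full dimension inside a connected group is everything. Set $\bar G=G/R_0$ and let $\pi:G\to\bar G$ be the quotient map. Since the hull of any normal solvable $N\trianglelefteq G$ has identity component a definable connected normal solvable subgroup, that identity component lies in $R_0$; combined with the fact that preimages of normal solvable subgroups of $\bar G$ are normal and solvable in $G$ (extensions of $R_0$), one gets $R(G)=\pi^{-1}(R(\bar G))$. In $\bar G$ the maximal definable connected normal solvable subgroup is trivial (pull such a subgroup back: it would be connected since $R_0$ is, and solvable, so it would lie in $R_0$); this forces $Z(\bar G^0)$ to be finite, and forces the hull of every normal solvable subgroup of $\bar G$ to be a \emph{finite} normal solvable subgroup. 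Any finite normal solvable $F\trianglelefteq\bar G$ meets $\bar G^0$ inside $Z(\bar G^0)$ and embeds into $\bar G/\bar G^0$ modulo that intersection, so $|F|\leq|Z(\bar G^0)|\cdot[\bar G:\bar G^0]$; as the product of two such subgroups is again one of them, a finite normal solvable subgroup of $\bar G$ of maximal order contains all the others and hence equals $R(\bar G)$. Thus $R(\bar G)$ is finite, solvable and definable, so $R(G)=\pi^{-1}(R(\bar G))$ is definable, normal, and solvable (an extension of $R_0$ by $R(\bar G)$). The two delicate points are the commutator estimate — where the normality of $\widetilde{[A,B]}$ is precisely what turns the relevant sets into subgroups — and the uniform bound on the order of a finite normal solvable subgroup once the connected radical has been divided out.
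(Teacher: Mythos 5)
The paper does not actually prove this statement: it is imported as a black-box Fact from \cite{BJOcom} (Lemma 4.5), so there is no internal argument to compare yours against. Judged on its own, your proof is correct and complete: the descending chain condition does yield a definable hull $\widetilde H$ that is normal whenever $H$ is; the identity $[x_1x_2,b]=x_1[x_2,b]x_1^{-1}[x_1,b]$ together with normality of $\widetilde{[A,B]}$ really does make $\{x:[x,b]\in\widetilde{[A,B]}\}$ a definable subgroup, so the hull of a normal solvable subgroup is definable, normal and solvable; and after splitting off the maximal definable connected normal solvable $R_0$, the bound $|F|\leq |Z(\bar G^0)|\cdot[\bar G:\bar G^0]$ on finite normal solvable subgroups of $\bar G=G/R_0$ correctly identifies $R(\bar G)$ with the one of maximal order, giving $R(G)=\pi^{-1}(R(\bar G))$ definable, normal and solvable. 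This is in the same spirit as the cited reference (definable hulls via the chain condition plus commutator estimates), so your argument would serve as a valid self-contained proof of the Fact.
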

	
In the linear case, any semisimple subgroup $S$ (in particular the subgroup $S$ in the Levi Decomposition of any linear group $G$) is semialgebraic:
	
		\begin{fact}[\cite{PPSlinear}, Remark $4.4$] \label{semisimplelinear}
		If $S$ is a connected semisimple linear Lie group, then $S$ is semialgebraic (i.e.~definable in $(\mathbb{R},0,+,\cdot ,<)$).
		\end{fact}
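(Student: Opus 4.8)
The plan is to reduce the statement to the algebraicity of semisimple Lie subalgebras of $\mathfrak{gl}_n$. Realise $S$ through a faithful finite-dimensional representation as a connected (analytic) subgroup of $GL(n,\mathbb{R})$, and let $\mathfrak{s}\subseteq\mathfrak{gl}(n,\mathbb{R})$ be its Lie algebra; by hypothesis $\mathfrak{s}$ is semisimple. The first and decisive step is to invoke Chevalley's theorem that every semisimple Lie subalgebra of $\mathfrak{gl}(V)$ is \emph{algebraic}, i.e.\ is the Lie algebra of a Zariski-closed subgroup. This is exactly where semisimplicity is indispensable: it is the kind of hypothesis that fails for the one-parameter group displayed in the introduction, whose one-dimensional abelian Lie algebra is not algebraic (its algebraic hull is two-dimensional), which is precisely why that group is not semialgebraic.

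Next I would manufacture a genuinely real algebraic group. Complexifying gives $\mathfrak{s}_{\mathbb{C}}\subseteq\mathfrak{gl}(n,\mathbb{C})$, which is still semisimple, so Chevalley over $\mathbb{C}$ yields a complex algebraic subgroup $\mathbf{G}\leq GL(n,\mathbb{C})$ with $\mathrm{Lie}(\mathbf{G})=\mathfrak{s}_{\mathbb{C}}$. Since $\mathfrak{s}$ is a real form, $\mathfrak{s}_{\mathbb{C}}$ is stable under complex conjugation, hence so is $\mathbf{G}$, so its vanishing ideal is generated by polynomials with real coefficients and $\mathbf{G}$ is defined over $\mathbb{R}$. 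Then $\mathbf{G}(\mathbb{R})=\mathbf{G}\cap GL(n,\mathbb{R})$ is the real-point group, with Lie algebra $\mathfrak{s}_{\mathbb{C}}\cap\mathfrak{gl}(n,\mathbb{R})=\mathfrak{s}$, and it is cut out in $GL(n,\mathbb{R})$ by finitely many polynomial equations, hence is a semialgebraic set.

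It remains to match $S$ with a semialgebraic set. Both $S$ and the Euclidean identity component $\mathbf{G}(\mathbb{R})^{\circ}$ are connected Lie subgroups of $GL(n,\mathbb{R})$ with the same Lie algebra $\mathfrak{s}$, so they coincide; in particular this re-derives that $S$ is closed. Finally I would use o-minimality of the real field: a semialgebraic group has only finitely many connected components, each of which is semialgebraic (by cell decomposition), so the identity component $\mathbf{G}(\mathbb{R})^{\circ}=S$ is semialgebraic, i.e.\ definable in $(\mathbb{R},0,+,\cdot,<)$.

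I expect the main obstacle to be the first step, the algebraicity of $\mathfrak{s}$: this is the genuinely nontrivial input and the entire statement hinges on it. The secondary care-points are the descent to $\mathbb{R}$ (verifying $\mathbf{G}$ is defined over $\mathbb{R}$ and computing $\mathrm{Lie}(\mathbf{G}(\mathbb{R}))$) and the passage from the full real-point group to its Euclidean identity component, but both are routine once the algebraic group is in hand.
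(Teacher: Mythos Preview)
The paper does not prove this statement: it is recorded as a \emph{Fact} and cited from \cite{PPSlinear}, Remark~4.4, with no argument given. There is therefore nothing in the paper to compare your proof against.

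Your argument itself is sound and is the standard route: Chevalley's theorem that a perfect (in particular semisimple) Lie subalgebra of $\mathfrak{gl}_n$ is algebraic is indeed the decisive input, and your identification $S=\mathbf{G}(\mathbb{R})^\circ$ together with finiteness of components (via cell decomposition or Whitney) finishes it. One minor simplification: since Chevalley's theorem already holds over $\mathbb{R}$, you can take $\mathbf{G}$ to be the Zariski closure of $S$ in $GL(n,\mathbb{R})$ directly and skip the complexification-and-descent detour; the Lie algebra of this closure equals $\mathfrak{s}$ precisely because $\mathfrak{s}$ is algebraic, and then $S$ is its Euclidean identity component as before.
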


We give a last fact that will be put to use in the last part of the proof of the main theorem of this section, a particular case of Lema $3.1$ in \cite{Hochschild}.

\begin{fact}\label{quotientrep}
Let $G$ be a Lie group with a faithful representation $\rho$ on a finite dimensional $\mathbb R$-vector space $V$. Let $F$ be a finite central subgroup. Then there exists a real  finite dimensional vector space $W$ (a direct sum of finite tensor products of $G$-stable subspaces of $V$) such that the natural action of $G$ on $W$ has kernel $F$.
\end{fact}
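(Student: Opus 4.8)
The plan is to manufacture $W$ out of the $F$-isotypic pieces of $V$. First I would use that $F$ is finite and central in $G$, so $\rho(F)\subseteq GL(V)$ is a finite abelian group of matrices commuting with all of $\rho(G)$; in characteristic zero it is simultaneously diagonalizable over $\mathbb C$, giving $V_{\mathbb C}:=V\otimes_{\mathbb R}\mathbb C=\bigoplus_{\chi\in X}V_\chi$ with $V_\chi=\{v:\rho(f)v=\chi(f)v\ \forall f\in F\}$ and $X\subseteq\widehat F$ the finite set of characters of $F$ occurring on $V$. Because $F$ is central each $V_\chi$ is $G$-stable, and because $\rho$ is faithful on $F$ the characters in $X$ separate the points of $F$, hence generate the dual group $\widehat F$.

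Next I would set, for a bound $L$ to be fixed below,
\[
W_{\mathbb C}\ :=\ \bigoplus\ \Big\{\,V_{\psi_1}\otimes\cdots\otimes V_{\psi_\ell}\ :\ 1\le\ell\le L,\ \psi_1,\dots,\psi_\ell\in X,\ \psi_1\cdots\psi_\ell=1\,\Big\},
\]
a direct sum of finite tensor products of $G$-stable subspaces of $V_{\mathbb C}$. By construction $F$ acts trivially on each summand, so $F\subseteq\ker(G\to GL(W_{\mathbb C}))$. Since complex conjugation permutes the $V_\chi$ and hence the summands, $W_{\mathbb C}$ is defined over $\mathbb R$ and descends to a real $W$ of the required shape; alternatively one runs the argument verbatim with the real isotypic components of $V$, which are genuine $G$-stable real subspaces.

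The heart of the matter is the reverse inclusion $\ker(G\to GL(W))\subseteq F$. Take $g$ acting trivially on $W$. For $\chi\in X$ of order $n_\chi$ in $\widehat F$ (so $n_\chi\le|F|$) the monomial $V_\chi^{\otimes n_\chi}$ has trivial net character, hence — once $L\ge|F|$ — is a summand of $W$, forcing $\rho(g)^{\otimes n_\chi}=\operatorname{Id}$ on $V_\chi^{\otimes n_\chi}$. In characteristic zero a matrix whose $n$-th tensor power is the identity is a scalar (a nontrivial unipotent part survives every tensor power, and two distinct eigenvalues are incompatible with the tensor power being scalar), so $\rho(g)|_{V_\chi}=c_\chi(g)\operatorname{Id}$ for some $c_\chi(g)\in\mathbb C^{\star}$. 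Hence $g$ acts on the summand $V_{\psi_1}\otimes\cdots\otimes V_{\psi_\ell}$ by the scalar $\prod_j c_{\psi_j}(g)$, which must be $1$; equivalently the homomorphism $\phi_g:\mathbb Z^{X}\to\mathbb C^{\star}$, $e_\chi\mapsto c_\chi(g)$, kills every non-negative element of the relation lattice $\Lambda:=\ker(\mathbb Z^{X}\twoheadrightarrow\widehat F,\ e_\chi\mapsto\chi)$ of $\ell_1$-norm $\le L$. As $\chi^{|F|}=1$ for all $\chi$, $\Lambda$ contains $|F|\,\mathbb Z^{X}$, so it is generated by vectors with entries in $\{0,\dots,|F|\}$, i.e.\ of $\ell_1$-norm $\le|F|\cdot|X|$; taking $L:=|F|\cdot|X|$ therefore makes $\phi_g$ vanish on all of $\Lambda$. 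Thus $(c_\chi(g))_{\chi}\in\Lambda^{\perp}=\operatorname{Hom}(\mathbb Z^{X}/\Lambda,\mathbb C^{\star})=\operatorname{Hom}(\widehat F,\mathbb C^{\star})$, which is canonically $F$; so there is $f\in F$ with $c_\chi(g)=\chi(f)$ for all $\chi\in X$, whence $\rho(g)$ and $\rho(f)$ agree on every $V_\chi$, hence on $V_{\mathbb C}$, hence $\rho(g)=\rho(f)$, and faithfulness of $\rho$ gives $g=f\in F$.

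I expect the real obstacle to be exactly this last step: deciding how many tensor monomials of trivial net $F$-character to pile into $W$ so that what is killed is precisely $F$ and not some a priori larger subgroup on which $\rho$ happens to be scalar on each isotypic piece. The Pontryagin-duality bookkeeping ($\Lambda$, $\Lambda^{\perp}\cong F$) together with the elementary observation about tensor powers of matrices is what pins the kernel down; the passage back to $\mathbb R$ is a routine descent. I would also keep an eye on making the parenthetical claim in the statement literally true — that the pieces of $W$ are tensor products of $G$-stable subspaces of $V$ itself — which is handled by using the real isotypic components throughout.
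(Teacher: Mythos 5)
Your argument is correct and follows essentially the same route as the paper's (the paper treats this as a special case of Hochschild's Lemma~3.1, and the authors' own write-up likewise decomposes $V\otimes_{\mathbb R}\mathbb C$ into $F$-isotypic pieces, sums tensor monomials with trivial net $F$-character, and pins down the kernel by a character-lattice computation before descending to $\mathbb R$). The only real difference is organizational: the paper reduces to $F$ of prime order and inducts, whereas you handle a general finite central $F$ in one pass via the relation lattice $\Lambda$ and the explicit bound $L=|F|\cdot|X|$.
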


Notice that $W$ is definable (any finite dimensional vector space is) and definability of if $\rho$ and $V$ will imply defimnability of the natural action of $G$ on $W$ (since it is a finite sum of tensor products of restrictions of $\rho$). In particular, quotients of linear definable groups by finite central kernels are linearly definable.

\medskip

We now give the proof of \Cref{criterionthm}. Throughout the proof we will repeatedly use the Lie functor that associates the Lie algebra to a Lie group.
	
	\lintheorem*

\begin{proof}
	The solvable radical $R$ is, by \Cref{solvlintheorem}, Lie-isomorphic to a definably linear group $R_1$. We would like to invoke the \nameref{extlemma} but \emph{a priori} the decomposition of $G$ into its solvable radical $R$ and a Levi subgroup $S$ is not fine enough. We will need to refine the decomposition. Consider the decomposition of $R$ into a simply connected (torsion free) normal subgroup $T$ and a compact subgroup $K$ as in \cite{COSsolvable}.
	
	We first show that $S$ can be chosen so that $S$ and $K$ commute making $SK$ a subgroup of $G$. Consider the adjoint action of $K$ on $\mathfrak{g}$ : we can see $\mathfrak{g}$ as a semisimple $K$-module (since $K$ is compact). Looking at the simple case one can easily see that $\mathfrak{g}=(\left[ \mathfrak{k},\mathfrak{g} \right] )\oplus \mathfrak{z}_{\mathfrak{g}}(\mathfrak{k})$ where $\mathfrak{k}=Lie(K)$ and $\mathfrak{z}_{\mathfrak{g}}(\mathfrak{k})$ its center. Write the Levi decomposition for $\mathfrak{z}_{\mathfrak{g}}(\mathfrak{k})$ as a semidirect sum $\mathfrak{z}_{\mathfrak{g}}(\mathfrak{k})=\mathfrak{u}_R+\mathfrak{u}_S$ where $\mathfrak{u}_R$ is the solvable radical and $\mathfrak{u}_S$ is a maximal semisimple subalgebra. But $\mathfrak{r}+\mathfrak{u}_R$ is a solvable ideal of $\mathfrak{g}$ (here $\mathfrak{r}=Lie(R)$) and since $\mathfrak{r}$ is the maximal solvable ideal of $\mathfrak{g}$ we must have $\mathfrak{r}+\mathfrak{u}_R \subseteq \mathfrak{r}$ so $\mathfrak{u}_R \subseteq \mathfrak{r}$. So the Levi decomposition of $\mathfrak{g}$ can be written $\mathfrak{g}=\mathfrak{r}+\mathfrak{u}_S$ with $\mathfrak{u}_S$ is a maximal semisimple subalgebra. Let $S_1$ be the connected subgroup of $G$ corresponding to the Lie algebra  $\mathfrak{u}_S$. It is also a Levi factor (maximal semisimple subgroup) of $G$ so it is in fact a conjugate of $S$. Since $\left[\mathfrak{u}_S,\mathfrak{k}\right]=(0)$ we get that $S_1$ commutes with $K$. By setting $S=S_1$ we may assume that $S$ commutes with $K$ and $SK$ is a group.
	
	We will show that $G$ is a semidirect product of $T$ and $H:=SK$. We already know that $G=(SK)T$  so we are left to show that $H\cap T$ is trivial. Take an element $sk=t$ in the intersection with $s\in S$, $k\in K$ and $t\in T$. Then $s=tk^{-1}\in S\cap R$ which is finite. So we have finitely many choices for $s$ and since $T\cap K$ is trivial, any fixed $s$ determines exactly the possible choices of the pair $(t,k)$. $H\cap T$ is therefore finite; but $T$ has no torsion so the intersection is trivial.

\medskip
	
	We will now use \Cref{unipotonnilp} to start with a representation $\rho$ of $T$ whose image is definable. We extend the representation using \cite[Chap.~XVII, Theorem~2.2]{Hochschild} (which is the ``Lie''-version of the \Cref{extlemma}) and we obtain a representation $\widetilde{\rho}$ of $G$ that is faithful on $T$ and such that the image of $T$ is definable.
	
But both $S$ and $K$ are semialgebraic (by \Cref{semisimplelinear} and  \Cref{compactlinear}), so the full image of $\widetilde{\rho}$ is definable.

\medskip
	
	To complete the proof we need a faithful and definable representation of $H=SK$. $S$ and $K$ commute so we have $H\simeq (S\times K)/F$ where $F=S\cap K$ is central in $H$ and finite (because $F$ is connected and $S$ is linear). Since both $K$ and $S$ have definable and faithful representations $\rho_K$ and $\rho_S$ respectively, $\sigma=\rho_K+\rho_S$ is a definable and faithful representation of $S\times K$. Using Fact \ref{quotientrep} we get a definable representation of $K\times S$ on a definable space $W$, with kernel $F$. This is a definable faithful representation $\widetilde{\sigma}$ of $H$.

Now $G=T\rtimes H$ so $H$ is Lie-isomorphic to $G/T$ and we can use the quotient map to get from $\widetilde{\sigma}$ to a definable representation of $G$ that is faithful on $H$; abusing notation, we sill also refer to this representation as $\widetilde{\sigma}$. Taking the direct sum of $\widetilde{\rho}$ and $\widetilde{\sigma}$ we get a faithful representation of $G$ whose image is definable.
\end{proof}

Since any definable linear Lie group is in fact Lie-isomorphic to a linearly definable group, we get the following:

\begin{theorem}
	Let $G$ be a linear Lie group, the following are equivalent:
	\begin{itemize}
		\item $G$ is Lie isomorphic to a group definable in an $o$-minimal expansion of $(\mathbb{R},0,+,1,\cdot,<)$.
		\item $G$ is Lie isomorphic to a group linearly definable in an $o$-minimal expansion of $(\mathbb{R},0,+,1,\cdot,<)$.
		\item $G$ is Lie isomorphic to a group definable in $\mathbb{R}_{an,exp}$.
		\item $G$ is Lie isomorphic to a group definable in $\mathbb{R}_{exp}$.		
	\end{itemize}
\end{theorem}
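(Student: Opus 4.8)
The plan is to deduce everything from \Cref{criterionthm}; the only genuinely new point is that its construction can be carried out inside $\mathbb{R}_{\exp}$. (As in the earlier theorems we take $G$ connected.) The implications $(4)\Rightarrow(3)\Rightarrow(1)$ are immediate, since $\mathbb{R}_{\exp}$ is a reduct of $\mathbb{R}_{an,\exp}$ and $\mathbb{R}_{an,\exp}$ is an $o$-minimal expansion of the real field, and $(2)\Rightarrow(1)$ holds because a linearly definable group is in particular definable. It therefore suffices to establish $(1)\Rightarrow(2)$ and $(1)\Rightarrow(4)$: together with the above these give $(1)\Leftrightarrow(2)$ and close the loop $(1)\Rightarrow(4)\Rightarrow(3)\Rightarrow(1)$.

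For $(1)\Rightarrow(2)$: if $G$ is Lie-isomorphic to a group $G'$ definable in an $o$-minimal expansion of the reals, then by \cite{BJOcom} its solvable radical $R'$ is definable, hence triangular by compact by \Cref{solvablecriterion}; since a Lie isomorphism preserves the solvable radical, the solvable radical of $G$ is triangular by compact. As $G$ is linear, \Cref{criterionthm} makes $G$ Lie-isomorphic to a definably linear group $H$. The faithful definable linear action of $H$ is a definable, hence smooth, injective homomorphism into $GL_n(\mathbb{R})$ with definable, hence closed, image, so it identifies $H$ — and therefore $G$ — with a linearly definable group, which is $(2)$.

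For $(1)\Rightarrow(4)$ I would re-run the proof of \Cref{criterionthm}, together with those of \Cref{solvlintheorem} and \Cref{supersolvablerep}, keeping track of which reduct of the ambient structure each object lives in; by the previous paragraph the hypothesis of $(1)$ already supplies what that proof uses, namely that $G$ is linear with triangular-by-compact solvable radical. In that proof $G=T\rtimes(SK)$ with $T$ simply connected supersolvable and $S$, $K$ the Levi factor and the compact part of the solvable radical, and a faithful representation $\Phi=\widetilde{\rho}\oplus\widetilde{\sigma}$ of $G$ is produced, with $\widetilde{\rho}$ faithful on $T$ and $\widetilde{\sigma}$ factoring through $G/T\cong SK$. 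Everything built from $S$ or $K$ is semialgebraic: $\Phi(S)$ and $\widetilde{\rho}(S)$ are connected semisimple linear groups, semialgebraic by \Cref{semisimplelinear}; $\Phi(K)$ and $\widetilde{\rho}(K)$ are compact linear groups, hence algebraic, hence semialgebraic; and the operations applied to them (direct sums, the tensor construction of \Cref{quotientrep}, products of commuting subgroups, passage to images) stay semialgebraic. The one transcendental ingredient is the representation of $T$: as $T$ is normal in $G$ the conjugates of the triangular representation from \Cref{supersolvablerep} that enter the extension are again triangular representations of $T$, so $\widetilde{\rho}(T)$ is an exponential image of an upper-triangular real matrix algebra, and such a set is definable in $\mathbb{R}_{\exp}$. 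Hence $\Phi(G)=\Phi(T)\cdot\Phi(SK)$ is the set-product of an $\mathbb{R}_{\exp}$-definable set with a semialgebraic one; as the group operation on $\Phi(G)\le GL_N(\mathbb{R})$ is matrix multiplication, which is semialgebraic, $\Phi(G)$ is a group definable in $\mathbb{R}_{\exp}$. Finally $\Phi$ is injective and $\Phi(G)$, being a definable subgroup of $GL_N(\mathbb{R})$, is closed, so $\Phi$ is a Lie isomorphism onto $\Phi(G)$, giving $(4)$.

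The hard part is the verifications in the last step. One needs that the exponential of an upper-triangular matrix with real spectrum is $\mathbb{R}_{\exp}$-definable in that matrix — this follows from writing such an exponential, via coordinates of the second kind on $T$, as a finite product of factors $\exp(tM)$ with $M$ upper triangular, whose entries are sums of polynomials in $t$ times real exponentials $e^{\lambda t}$ — and one must check that triangularity really does survive the extension and that none of the auxiliary constructions (the \nameref{extlemma} and its Lie analogue, \Cref{quotientrep}, the passage to $G/T$) uses anything beyond the real exponential. Since the real exponential is the only transcendental map ever invoked, the whole construction descends to $\mathbb{R}_{\exp}$ and the four conditions are equivalent.
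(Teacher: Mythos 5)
Your proposal is correct and follows essentially the same route as the paper: deduce everything from \Cref{criterionthm}, then re-examine its proof to see that in the decomposition $G=(SK)\ltimes T$ the factors $S$ and $K$ are semialgebraic, $T$ is the matrix exponential of upper-triangular matrices and hence $\mathbb{R}_{\exp}$-definable, and the group law is matrix multiplication, so the whole construction lives in $\mathbb{R}_{\exp}$. Your write-up is in fact somewhat more careful than the paper's (e.g.\ in justifying that the solvable radical of $G$ is triangular by compact before invoking \Cref{criterionthm}, and in spelling out why the exponential of a real-spectrum upper-triangular matrix is $\mathbb{R}_{\exp}$-definable), but the underlying argument is the same.
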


\begin{proof}
We will prove that any definable linear Lie group is actually definable in $\mathbb{R}_{exp}$. To see this take a linear Lie group $G$ satisfying the first condition. By \Cref{criterionthm} we get a matrix group $G_1$ that is definable and Lie-isomorphic to $G$. If we take in the proof of \Cref{criterionthm} we obtained linearity and definability analysing $G_1$ into a semidirect product $(SK)\ltimes T$ where $S$ was semialgebraic (it is semisimple and linear), $K$ algebraic (it is compact and linear) and $T$ definable in $\mathbb{R}_{exp}$ (it is the matricial exponential of an upper-triangular matrix). So $G_1$ as a set is definable in $\mathbb{R}_{exp}$; since the group law is given by matrix multiplication, it is polynomial in its coordinates, hence also definable in $\mathbb{R}_{exp}$. We notice that once we achieve linearity of the solvable radical $R=T\rtimes K$ of $G$ we do not need the analytic functions which were needed in \cite{COSsolvable}, since the compact part $K$ acts by matrix multiplication on the supersolvable part $T$.
	
	The other implications are implied by  \Cref{criterionthm} and results in \cite{COSsolvable}.
\end{proof}

\section{Finite covers of definable Lie groups}\label{sec4}

In order to generalize the previous theorem to the non linear case we will need to study finite covers of definable Lie groups. This is of independent interest, so we include these results in a separate section. We will make strong use of the construction of the $o$-minimal universal cover defined in \cite{universalcover}. We refer the reader to this paper for details and only use some of the facts proved there. In particular we will need the following.

\begin{theorem}\label{paths}
		Let $G$ be a connected definable Lie group. The Lie universal cover $\widetilde{G}$ and the $o$-minimal universal cover $\widetilde{G\mkern 0mu}^{\text{def}}\!\!\!\!$ of $G$ are Lie-isomorphic. Moreover there is a locally definable covering map $\pi_{\text{def}}: \widetilde{G\mkern 0mu}^{\text{def}}\longrightarrow G$ such that the following diagram commutes:
		\end{theorem}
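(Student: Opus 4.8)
The plan is to realize both universal covers as spaces of path–classes in $G$ and compare them directly. Recall from \cite{universalcover} that $\widetilde{G\mkern 0mu}^{\text{def}}$ comes equipped with a locally definable covering homomorphism $\pi_{\text{def}}\colon\widetilde{G\mkern 0mu}^{\text{def}}\to G$ whose kernel is the o-minimal fundamental group $\pi_1^{\text{def}}(G)$, a countable discrete central subgroup, and that $\widetilde{G\mkern 0mu}^{\text{def}}$ is o-minimally simply connected. Since $\widetilde{G\mkern 0mu}^{\text{def}}$ is a locally definable group in an o-minimal expansion of $\mathbb{R}$, it carries a Lie group structure for which $\pi_{\text{def}}$ is a local Lie isomorphism (invoking the locally definable version of Pillay's theorem, or transporting the structure from $\widetilde{G}$ once the comparison map below is known to be a bijection). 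In particular $\widetilde{G\mkern 0mu}^{\text{def}}$, $\widetilde{G}$ and $G$ all share the Lie algebra $\mathfrak{g}$ via their covering maps. First I would fix, once and for all, this Lie structure and the map $\pi_{\text{def}}$, and let $p\colon\widetilde{G}\to G$ denote the Lie covering map.

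Next I would build the comparison map. Every definable path $\gamma\colon[0,1]\to G$ is in particular a continuous path, and every definable homotopy rel endpoints is a topological homotopy; using the description of $\widetilde{G\mkern 0mu}^{\text{def}}$ (resp.\ $\widetilde{G}$) as classes of definable (resp.\ continuous) paths based at the identity under concatenation, this inclusion induces a group homomorphism $\Phi\colon\widetilde{G\mkern 0mu}^{\text{def}}\to\widetilde{G}$. By construction $p\circ\Phi=\pi_{\text{def}}$, so $\Phi$ (or its inverse, depending on the orientation chosen for the diagram) exhibits commutativity of the stated diagram. Moreover $\Phi$ is continuous and, since $p$ and $\pi_{\text{def}}$ are both local homeomorphisms over $G$, it is a local homeomorphism; being also a homomorphism of Lie groups, $\Phi$ is automatically a local Lie isomorphism.

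It then remains to show $\Phi$ is a bijection, and for this I would reduce everything to homotopy of paths in the honestly definable manifold $G$ itself, rather than in the merely locally definable $\widetilde{G\mkern 0mu}^{\text{def}}$. Surjectivity of $\Phi$ amounts to the statement that every continuous path in $G$ is homotopic rel endpoints to a definable one, and injectivity amounts to the statement that two definable paths that are topologically homotopic rel endpoints are already definably homotopic; equivalently, the canonical map $\pi_1^{\text{def}}(G)\to\pi_1(G)$ is an isomorphism. Both assertions are instances of the comparison between o-minimal and topological homotopy for definable sets over the real field (Baro--Otero), applicable since $G$, as a connected definable Lie group, is a definable manifold over $\mathbb{R}$. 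Granting them, $\Phi$ is a bijective local homeomorphism, hence a homeomorphism, hence a Lie isomorphism intertwining $p$ and $\pi_{\text{def}}$ and carrying $\pi_1(G)$ onto $\ker\pi_{\text{def}}$.

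The main obstacle is precisely the identification $\pi_1^{\text{def}}(G)\cong\pi_1(G)$: this is the only place where the hypothesis that the base structure is an o-minimal expansion of $\mathbb{R}$ (and not an arbitrary o-minimal structure) is genuinely used, and it rests on the nontrivial o-minimal approximation and triangulation machinery. A secondary technical point is to keep the logical order clean: since $\widetilde{G\mkern 0mu}^{\text{def}}$ is only locally definable, I would present the path–space comparison first, deduce from it that $\Phi$ is a homeomorphism, and use this to pin down the Lie group structure on $\widetilde{G\mkern 0mu}^{\text{def}}$, so as not to argue circularly about which structures are assumed versus derived.
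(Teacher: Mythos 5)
Your proposal is correct and follows the same overall skeleton as the paper: both realize the two universal covers as path classes, reduce the Lie isomorphism to (i) every continuous path in $G$ being homotopic rel endpoints to a definable one and (ii) topologically homotopic definable paths being definably homotopic, and observe that commutativity of the diagram is then automatic. The one substantive difference is how (i) and (ii) are established. You outsource them to the general comparison theorems between o-minimal and topological homotopy over $\mathbb{R}$ (Baro--Otero), which is legitimate and makes the argument shorter, at the cost of invoking triangulation/approximation machinery as a black box. The paper instead proves both statements directly and elementarily: it covers $G$ by finitely many definable charts, each definably homeomorphic to a convex subset of some $\mathbb{R}^\ell$, applies the Lebesgue number theorem to subdivide $[0,1]$ (resp.\ the square $[0,1]^2$ for homotopies) finely enough that each piece lands in a single chart, and replaces each piece by the pullback of a straight-line segment (resp.\ uses straight-line homotopies between consecutive segments), gluing the results into a definable path (resp.\ definable homotopy) equivalent to the original. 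Your closing remark about avoiding circularity in endowing $\widetilde{G\mkern 0mu}^{\text{def}}$ with its Lie structure --- doing the path comparison first and only then transporting the structure --- is a point the paper glosses over, and is worth keeping. No gap; the two routes differ only in whether the homotopy-approximation step is proved or cited.
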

		\[
		\begin{tikzcd}[column sep=large, row sep=large]
		\widetilde{G} \ar[d, "\pi"', twoheadrightarrow] \ar[r, "\sim"] & \widetilde{G\mkern 0mu}^{\text{def}}  \ar[dl, twoheadrightarrow, "\pi_{\text{def}}"] \\
		G
		\end{tikzcd}
		\]
		\begin{itshape}
			where $\pi:\widetilde{G}\twoheadrightarrow G$ is the usual Lie covering map.
		\end{itshape}

\begin{proof} The construction of $\widetilde{G\mkern 0mu}^{\text{def}}$ is based on the standard construction of $\widetilde{G}$ via  continuous paths (quotienting by homotopy), requiring the paths and homotopies to be both definable and continuous. The isomorphism from $\widetilde{G\mkern 0mu}^{\text{def}}$ to $\widetilde{G}$ will send a definable path to its homotopy class. We will need to show it is well defined (two definable maps which are homotopically equivalent have a definable homotopy between them) and that it is surjective, so that any path is homotopically equivalent to a definable one. We will prove the latter and skecth the proof of the former, which is essentially the same idea but the notation is much more complicated.

\medskip
		
$G$ is a definable Lie group we so can find (using cell decomposition) definable open sets $\mathcal{U}_1,\dots,\mathcal{U}_k$ covering $G$, each definably homeomorphic to an open convex, simply connected subset $\mathcal{B}_i$ in some cartesian power $\mathbb{R}^\ell$.

Take a continuous path $\sigma$ in $G$, and let $\sigma$ be parametrized with $t\in I:=\left[0,1\right]$.

By the Lebesgue's Number Theorem, there is some $\delta>0$ such that any subinterval of $I$ of size less than $\delta$ is contained in some of the $\mathcal{U}_i$'s. Let $m$ be such that $\frac{ 1}{m}<\delta$ and so that if $I_j:=\left[\frac{j }{m},\frac{(j+1)   }{m}\right]$, then the image of $I_j$ under $\sigma$ is fully contained in $\mathcal{U}_{i_j}$. We find a definable $\sigma_{\text{def}}$ as follows. Let $\mu_j$ be the push forward of $\sigma_j$ in $B_n$. By convexity, the straight line $l_j$ joining $\mu_j(\frac{j   }{m})$ and $\mu_j(\frac{(j+1)   }{m})$ is fully contained in $B_n$, and because $B_n$ is simply connected $\mu_j$ is homotopic to $l_j$. We then pull back $l_j$ to $U_n$ and the resulting path is $\sigma^{\text{def}}_j$. By construction, the starting and endpoints of $\sigma^{\text{def}}_j$ are the precisely the same as those of $\sigma_j$ which by definition define the connected path $\sigma$. The gluing of the $\sigma_j^{\text{def}}$ defines a definable connected path homotopic to $\sigma$, as required.

\medskip

The proof of the fact that two definable maps which are homotopically equivalent are definably homotopically equivalent is essentially the same: Let $F:[0,1]^2\mapsto G$ be an homotopy between definable paths $F(0,t)$ and $F(1,t)$. One uses the Lebesgue number theorem to find $m$ such that for any $i,j\leq m$ such that the image
\[F\left(\left[\frac{i   }{m},\frac{(i+1)   }{m}\right]\times \left[\frac{j   }{m},\frac{(j+1)   }{m}\right]\right)\] is fully contained in some $\mathcal{U}_{k}$.

As before, the pullback in $U_k$ of the straight line segment $l_{i,j}$ in $B_k$ joining the images of $F(\frac{i   }{m}, \frac{j   }{m})$ and $F(\frac{i   }{m}, \frac{j+1   }{m})$ is homotopically equivalent to $F\left(\frac{i   }{m}, \left[ \frac{j }{m}, \frac{j+1   }{m}\right]\right)$, and the same thing happens when we replace $i$ with $i+1$. But $l_{i,j}$ and $l_{i+1,j}$ are belong to the convex subset $B_k$ of $\mathbb R^\ell$, so they are definably equivalent via the straight-line homotopy. Taking the images in $U_k$, and doing this for every $i,j$ we build a definable homotopy equivalent to $F$.

\medskip

This concludes the proof.\end{proof}

Recall that if $G$ is a connected Lie group a covering map is a continuous surjective map $p:H\longrightarrow G$ where $H$ is a connected Lie group such that for each point $g\in G$ there is an open neighborhood $\mathcal{U}$ of $g$ such that $p^{-1}(\mathcal{U})$ is a disjoint union of open sets. We say that the covering is finite if $p$ has finite kernel. We have the following theorem.

\begin{theorem}\label{Liedefuniversal}
	Let $H\longrightarrow G$ be a finite covering map of a connected definable Lie group $G$. Then $H$ is Lie-isomorphic a definable Lie group $H_{\text{def}}$ and there is a continuous definable covering map $p_{\text{def}}: H_{\text{def}}\longrightarrow G$ such that the following diagram commutes.
		\[
	\begin{tikzcd}[column sep=large, row sep=large]
	H \ar[d, "p"', twoheadrightarrow] \ar[r, "\sim"] & H_{\text{def}}  \ar[dl, twoheadrightarrow, "p_{\text{def}}"] \\
	G
	\end{tikzcd}
	\]
\end{theorem}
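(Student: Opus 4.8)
The plan is to obtain $H_{\text{def}}$ as a quotient of the $o$-minimal universal cover $\widetilde{G\mkern 0mu}^{\text{def}}$ by an appropriate locally definable subgroup of its deck group, mirroring how every finite cover of $G$ arises as a quotient of the Lie universal cover $\widetilde{G}$. Concretely, let $p:H\to G$ be the given finite covering. By \Cref{paths} we have a Lie isomorphism $\widetilde{G}\cong\widetilde{G\mkern 0mu}^{\text{def}}$ commuting with the projections to $G$; transport along it, so we may think of $\widetilde{G\mkern 0mu}^{\text{def}}$ as the Lie universal cover of $G$ equipped with the locally definable covering map $\pi_{\text{def}}$. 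Standard covering space theory gives that $H$ is Lie-isomorphic to $\widetilde{G}/\Lambda$ for a subgroup $\Lambda$ of the (discrete, central) fundamental group $\Gamma=\ker\pi=\pi_1(G)$, with $\Gamma/\Lambda$ finite (this is exactly the finiteness of the cover); the composite covering $\widetilde{G}/\Lambda\to G$ corresponds to $p$. Via the isomorphism of \Cref{paths}, $\Lambda$ sits inside $\ker\pi_{\text{def}}\le\widetilde{G\mkern 0mu}^{\text{def}}$, a locally definable discrete central subgroup.

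First I would argue that $\Lambda$, being of finite index in the locally definable group $\Gamma=\ker\pi_{\text{def}}$, is itself locally definable: each coset of $\Lambda$ is a translate, and a subgroup of finite index in a locally definable group, together with a choice of coset representatives, is cut out by finitely many definable conditions on each definable piece of $\widetilde{G\mkern 0mu}^{\text{def}}$, hence locally definable. Then I would form the quotient $H_{\text{def}}:=\widetilde{G\mkern 0mu}^{\text{def}}/\Lambda$. The key point is that quotienting a locally definable group by a locally definable subgroup with \emph{finite} quotient (here the quotient being further quotiented to reach $G$ is $\Gamma/\Lambda$, finite) yields a genuinely definable group: one takes a definable fundamental domain for the action of $\Lambda$ — which exists because $\pi_{\text{def}}$ is a locally definable covering of the \emph{definable} group $G$, so finitely many definable sheets over a definable cover of $G$ already surject, and $\Gamma/\Lambda$ finite means finitely many $\Lambda$-translates of such a piece cover $\widetilde{G\mkern 0mu}^{\text{def}}$ modulo $\Lambda$ — and reads off the group operation from the operation on $\widetilde{G\mkern 0mu}^{\text{def}}$. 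The induced map $p_{\text{def}}:H_{\text{def}}=\widetilde{G\mkern 0mu}^{\text{def}}/\Lambda\to\widetilde{G\mkern 0mu}^{\text{def}}/\Gamma=G$ is then a definable, continuous, finite covering map, and commutativity of the triangle is inherited from the commutativity in \Cref{paths}. Finally, the Lie isomorphism $H\cong\widetilde{G}/\Lambda\cong\widetilde{G\mkern 0mu}^{\text{def}}/\Lambda=H_{\text{def}}$ is immediate from \Cref{paths} and the fact that quotient by a discrete central subgroup is the same operation in the Lie and the definable categories (the underlying topological group is the same).

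The main obstacle I expect is the passage from \emph{locally} definable to \emph{definable}: showing that $\widetilde{G\mkern 0mu}^{\text{def}}/\Lambda$ is an honest definable group rather than merely a locally definable one. This needs the finiteness of the cover in an essential way and a careful construction of a definable fundamental domain — one must verify that finitely many definable "sheets" of $\pi_{\text{def}}$ over a definable open cover of $G$, suitably glued, form a definable set meeting every $\Lambda$-orbit exactly once (up to a measure-zero boundary that can be handled by the usual cell-decomposition bookkeeping), and that the resulting group chart is definable. The secondary technical point is checking that $\Lambda$ is locally definable; this is comparatively routine since $\Lambda$ has finite index in $\Gamma=\ker\pi_{\text{def}}$ and local definability of $\Gamma$ is already available from \cite{universalcover} together with \Cref{paths}. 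Everything else — the covering-space identification of $H$ with $\widetilde{G}/\Lambda$, centrality and discreteness of the relevant subgroups, commutativity of the diagram — is standard.
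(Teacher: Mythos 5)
Your proposal follows essentially the same route as the paper's proof: both pass to the $o$-minimal universal cover via \Cref{paths}, use compactness to extract a definable subset of $\widetilde{G\mkern 0mu}^{\text{def}}$ surjecting onto $G$, translate it by the finitely many coset representatives of the kernel to obtain a definable set $D$ surjecting onto $H$, and present $H_{\text{def}}$ as a quotient by a finite (hence definable) equivalence relation. The only cosmetic difference is that the paper sidesteps your ``definable fundamental domain'' concern by directly quotienting $D$ by the relation $d_1\sim d_2\Leftrightarrow \pi_H(d_1)=\pi_H(d_2)$, which is definable because $D\cdot D^{-1}\cap\text{Ker}(\pi_H)$ is finite.
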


\begin{proof}
	First, since $H$ is a finite cover of $G$ we have a covering map $\pi:H\twoheadrightarrow G$ with finite kernel. Now consider the universal cover $\widetilde{G}$ of $G$ and the definable universal cover $\widetilde{G\mkern 0mu}^{\text{def}}\!\!\!$ of $G$. By Theorem \ref{paths}, the two coincide.

From now on we will keep the notation $\widetilde{G}$ when we talk about the locally definable cover of $G$.

The universal property of universal covers gives us the following commutating diagram:
	
	\[
	\begin{tikzcd}[column sep=large, row sep=large]
	\widetilde{G} \ar[d, "\pi_G"', twoheadrightarrow] \ar[r, "\pi_H", twoheadrightarrow] & H  \ar[dl, twoheadrightarrow, "\pi"] \\
	G
	\end{tikzcd}
	\]
	
We know that $\widetilde{G}$ is a locally definable group and that $\pi_G$ is a continuous locally definable morphism. The morphisms $\pi_H$ and $\pi$ are only continuous but we are going to find a definable version of $H$ and the corresponding morphisms will also be definable. Since $\pi_G$ is locally definable and surjective we can find using (logic) compactness a definable subset $D_G$ in $\widetilde{G}$ such that the restriction $\pi_G:D_G \twoheadrightarrow G$ is surjective. We may of course assume that $D_G$ contains the identity.
	
Now $\text{Ker}(\pi)=\{h_1,\dots ,h_k\}$ is finite with $h_1=e_H$. We construct a definable set $D:=\bigcup_{i=1}^k  D_G \cdot g_i^{-1}$ where $g_i$ is a preimage of $h_i$ by $\pi_H$, choosing $e_G$ as $g_1$. Notice that the restriction of $\pi_H$ to $D$ is surjective.
	
Since $\text{Ker}(\pi_H)\subseteq\text{Ker}(\pi_G)$ and the latter is locally definable and discrete, any intersection with a definable set must be finite. This implies the following claim which we will use several times.

	\begin{claim}
		Let $X$ be a definable subset of $\widetilde{G}$ then $X\cap \text{Ker}(\pi_H)$ is a finite set.
	\end{claim}

		We can now define a group which is Lie-isomorphic to $H$. The universe is defined by taking $H_{\text{def}}:= D/\sim$ where $d_1\sim d_2$ if $\pi_H(d_1)=\pi_H(d_2)$ (equivalently, $d_1\cdot d_2\in D\cdot D^{-1} \cap \text{Ker}(\pi_H)$). This is a finite equivalence relation (and therefore definable) by the previous claim.
	
	We will define group multiplication and inverse operations on $D$ and check that they are compatible with $\sim$ and therefore pass nicely to $H_{\text{def}}$.
	
	Define $d_1\cdot d_2=d_3$ whenever $\pi_H(d_1\cdot d_2)=\pi_H(d_3)$. This is equivalent to $d_1\cdot d_2\cdot d_3^{-1}\in D^2\cdot D^{-1}\cap\text{Ker}(\pi_H)$, so it is a definable relation. Finally, we say that $d_1^{-1}= d_2$ if $d_1\cdot d_2\in D^2\cap\text{Ker}(\pi_H)$, so it is definable.
	
	This defines a group structure on $H_{\text{def}}$, and it also inherits the Lie group structure from $\widetilde{G}$ which, as it is shown in \cite{universalcover2}, it coincides with the usual $\tau$-topology on definable groups. Moreover, everything was defined so that the pushforward of $\pi_H$ to the $\sim$-quotient $H_{\text{def}}\rightarrow H$ is a continuous group isomorphism. So $H_{\text{def}}$ is a definable Lie group Lie-isomorphic to $H$, as required.
	\end{proof}

We get a nice corollary for semisimple Lie groups:
	
	\begin{corollary}
		Let $S$ be a connected semisimple Lie group. Then $S$ is Lie-isomorphic to a definable Lie group if and only if it has finite center.
	\end{corollary}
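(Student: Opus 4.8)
The plan is to prove the two implications separately, using o-minimality for the forward direction and reducing to the adjoint group together with \Cref{Liedefuniversal} for the converse.

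For the implication from definability to finiteness of the center, I would argue as follows. We may assume $S$ itself is a definable Lie group. Its center $Z(S)=\{g\in S:\forall h\,(gh=hg)\}$ is then a definable subgroup. Since $\mathfrak{s}=\operatorname{Lie}(S)$ is semisimple its center vanishes, so $Z(S)$ is a zero-dimensional, i.e.\ discrete, definable subgroup of $S$; by o-minimality (a definable set has finitely many connected components, and each connected component of a discrete set is a point) it must be finite. This is exactly the observation already made in the introduction.

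For the converse, suppose $Z:=Z(S)$ is finite. I would pass to the adjoint group: the adjoint representation $\operatorname{Ad}\colon S\to GL(\mathfrak{s})$ has kernel exactly $Z$ (as $S$ is connected), hence factors through a faithful finite-dimensional representation of $\bar S:=S/Z$. Thus $\bar S$ is a connected semisimple \emph{linear} Lie group, and by \Cref{semisimplelinear} it is semialgebraic, in particular definable. Now the projection $S\to\bar S$ is a covering map with finite kernel $Z$ (the center $Z(S)$ is closed, discrete since $\mathfrak{z}(\mathfrak{s})=0$, and finite by hypothesis), so $S$ is a finite cover of the connected definable Lie group $\bar S$. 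Applying \Cref{Liedefuniversal} we conclude that $S$ is Lie-isomorphic to a definable Lie group.

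I do not expect a serious obstacle: the substantive inputs — semialgebraicity of connected semisimple linear groups and definability of finite covers of definable Lie groups — are precisely \Cref{semisimplelinear} and \Cref{Liedefuniversal}, which are already available. The only points needing a little care are verifying that $\operatorname{Ad}$ has kernel exactly $Z(S)$ (so that $\bar S$ is genuinely linear, not merely locally so) and that $S\to\bar S$ is indeed a covering map, both of which are standard once $\mathfrak{z}(\mathfrak{s})=0$.
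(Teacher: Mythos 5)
Your proposal is correct and follows essentially the same route as the paper: the forward direction is the observation (already made in the introduction) that a definable discrete center must be finite by o-minimality, and the converse passes to the centerless quotient $S/Z(S)$, which is linear and hence semialgebraic by \Cref{semisimplelinear}, and then applies \Cref{Liedefuniversal} to the finite covering $S\to S/Z(S)$. The paper's own argument is just a terser version of this; your added care about $\ker(\operatorname{Ad})=Z(S)$ is a reasonable elaboration but not a departure.
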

	
	This is simply because since $S$ has finite center, the quotien $S/Z(S)$ is centerless hence linear and semialgebraic. The map $\pi: S \rightarrow S/Z(S)$ is a finite covering of $S$.
	
	It follows that the definable Lie groups with definable Levi subgroups are precisely those with finite center Levi subgroups (this should probably be generalized to Lie groups over arbitrary real closed field with the appropriate covering theory).

\section{A Non linear case}\label{sec5}

In this section we take a look at the case when we don't assume linearity of the Lie group. As in the linear case, our analysis will be based on the components of the Levi-decomposition. We prove the following.

\generalcase*

\begin{proof}
 	Let $G$ be a group satisfying the hypothesis. By \cite{COSsolvable} and the linear case we may assume that the solvable radical $R$ is Lie isomorphic to $R_{\text{def}}$, a definably linear solvable group with the additional property that if we decompose $R_{\text{def}}=T\rtimes K\subseteq GL_n(\mathbb{R})$ with $T$ is supersolvable and $K$ compact, then $\mathfrak{r}=\text{Lie}(R_{\text{def}})=\mathfrak{t} + \mathfrak{k}\subseteq \mathfrak{gl}_n(\mathbb{R})$ and $\mathfrak{t}$ is upper triangular. Since $S$ has finite center we know by \Cref{levidecomposition} that $G$ is the almost semidirect product of $R$ and $S$. Specifically, there is a morphism $\varphi$ with
	
	\[	\begin{array}{lccl}
	\Psi: & R \rtimes_{\scriptscriptstyle\varphi} S & \rightarrow & G\\
	& (r,s) & \mapsto & r\cdot s
	\end{array}
	\]
	
	In order to build a definable version of $G$ we will need to construct a definable morphism $\varphi : S \rightarrow \text{Aut}(R_{\text{def}})$. To do so we need the ``appropriate'' version of $S$.

	Define the linearizer $\Lambda (G)$ of a connected lie group $G$ as the intersection of the kernels of all its continuous finite dimensional representations. It is a closed and normal subgroup and as the adjoint representation is continuous it is central. It is a theorem of Goto (\cite{Hochschildmostow}[Theorem 7.1]) that $G/\Lambda (G)$ has a faithful representation. That makes $\Lambda (G)$ the smallest normal closed subgroup $P$ such that $G/P$ is Lie isomorphic to a linear group.
	
	In our case $S$ has finite center and we do not need to invoke Goto's theorem, we just need the following lemma:
	
	\begin{lemma}
		Let $G$ be a Lie group and suppose that $G/H_1$ and $G/H_2$ (with $H_1$ and $H_2$ normal and closed subgroups) both have faithful representations $\rho_1$ and $\rho_2$. Then $G/(H_1\cap H_2)$ has a faithful representation.
	\end{lemma}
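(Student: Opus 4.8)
The plan is to take the direct sum of the two given representations after pulling them back to $G$. First I would let $q_i\colon G\to G/H_i$ be the canonical projections for $i=1,2$; since $H_1$ and $H_2$ are closed and normal these are surjective continuous (indeed smooth) homomorphisms of Lie groups. Composing, set $\tau_i=\rho_i\circ q_i\colon G\to GL(V_i)$, a continuous finite-dimensional representation of $G$ whose kernel is exactly $H_i$, because $\rho_i$ is faithful on $G/H_i$.

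Next I would form $\tau=\tau_1\oplus\tau_2\colon G\to GL(V_1\oplus V_2)$, which is again a continuous finite-dimensional representation of $G$. An element $g$ lies in $\ker\tau$ if and only if $\tau_1(g)=\mathrm{Id}$ and $\tau_2(g)=\mathrm{Id}$, that is, if and only if $g\in H_1\cap H_2$; hence $\ker\tau=H_1\cap H_2$.

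Finally, $H_1\cap H_2$ is again a closed normal subgroup of $G$, so $G/(H_1\cap H_2)$ is a Lie group, and by the universal property of the quotient $\tau$ factors as $\tau=\bar\tau\circ q$, where $q\colon G\to G/(H_1\cap H_2)$ is the projection and $\bar\tau$ is a continuous — hence smooth — homomorphism $G/(H_1\cap H_2)\to GL(V_1\oplus V_2)$. Since $\ker\tau=H_1\cap H_2=\ker q$, the homomorphism $\bar\tau$ is injective, i.e.\ a faithful finite-dimensional representation of $G/(H_1\cap H_2)$, as required.

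I do not expect a genuine obstacle in this argument; the only points worth a sentence of care are that $H_1\cap H_2$ is closed and normal so that the quotient is a bona fide Lie group, and that a continuous homomorphism between Lie groups is automatically smooth, so that $\bar\tau$ is a morphism of Lie groups and not merely of abstract groups.
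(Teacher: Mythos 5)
Your proof is correct and is essentially the paper's own argument: pull back each $\rho_i$ to $G$ along the projection $G\to G/H_i$, take the direct sum, and note that its kernel is $H_1\cap H_2$. The extra care you take in factoring through $G/(H_1\cap H_2)$ and checking smoothness is fine but adds nothing beyond the paper's one-line proof.
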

	
	\begin{proof}
		Let us write $\widetilde{\rho_i}:=\pi_i \circ \rho_i$ where $\pi_i: G \twoheadrightarrow G/H_i$ is the canocical projection. The direct sum  $\widetilde{\rho_1}\oplus\widetilde{\rho_2}$ is a representation of $G$ whose kernel is $H_1\cap H_2$.
	\end{proof}
		
	Since $S$ has finite center, $\Lambda(S)\subseteq Z(S)$, and $S/Z(S)$ is centerless and therefore linear, applying the lemma a finitely many times gives us a faithful representation of $S/\Lambda(S)$. Let $S_{\text{lin}}$ be a linear copy of $S/\Lambda (S)$.
	
	\medskip
	
	$S_{\text{lin}}$ is linear and semisimple, hence semialgebraic. $S$ is a finite cover of $S_{\text{lin}}$ so by Theorem \ref{Liedefuniversal} there is  a group $S_{\text{def}}$ that is definable, Lie isomorphic to $S$ and a definable surjection $\pi: S_{\text{def}} \twoheadrightarrow S_{\text{lin}}$. Given that $S_{\text{lin}}$ is the biggest linear quotient of $S$ any representation of $S$ must factor through $S_{\text{lin}}$ .
	
	\medskip
	
	Finally, consider the action of $S_{\text{def}}$ on $R_{\text{def}}$ induced by the isomorphism from $S$ to $S_{\text{def}}$. This is a morphism $\varphi': S_{\text{def}} \rightarrow \text{Aut}(R_{\text{def}}) \subseteq \text{Aut}(\mathfrak{r}_{\text{def}}) \subseteq \text{GL}(\mathfrak{r}_{\text{def}})$. This must therefore factor through $S_{\text{lin}}$ and we have the following commutating diagram:
		\[
	\begin{tikzcd}[column sep=large, row sep=large]
	S_{\text{def}} \ar[d, "\pi"', twoheadrightarrow] \ar[r, "\varphi"] & \text{Aut}(R_{\text{def}}) \ar[r, hook, "\text{Der}"] & \text{Aut}(\mathfrak{r})\subseteq GL(\mathfrak{r})\\
	S_{\text{lin}} \ar[ru,  "\varphi_{\text{lin}}"'] \ar[urr, bend right, "\varphi_{\text{Lie}}"']
	\end{tikzcd}
	\]
	
	The morphism $\varphi_{Lie}$ maps a semisimple linear Lie group to a linear group, its graph $\Gamma$ is Lie isomorphic to $S_{\text{lin}}$, so it
	must be semialgebraic (recall that any linear semisimple Lie group is semialgebraic by \Cref{semisimplelinear}).
	
	We will prove that the action of $S_{\text{lin}}$ on $R_{\text{def}}$ is definable. Because the maximal torsion-free subgroup $T$ of $R_{\text{def}}$ is characteristic it must be stable under the action. Recall that our construction of $T$ implies that the Lie algebra $\mathfrak{t}$ is supersolvable and that the exponential restricted to $\mathfrak{t}$ is definable. For any $s\in S_{\text{lin}}$ and $t\in T$, 	we can choose $X\in\mathfrak{t}$ such that $\exp(X)=t$ and we have
	\[\varphi_{\text{lin}}(s)(t)=\exp\restriction_\mathfrak{t}(\varphi_{Lie}(s)(X))\]
	which implies that the restriction $\varphi_{\text{lin}}^T$ of the action to $T$ is definable.
	
	\medskip
	
	Understanding definability of $\varphi_{\text{lin}}$ on the maximal compact subgroup is a little harder.
	
	\begin{claim}
		Let $K_0$ be a maximal compact subroup, $\mathcal{K}$ be the set of maximal compact subgroups of $R_{\text{def}}$ and $\mathbf{K}=R_{\text{def}}/T$. Then the following hold:
		
		\begin{enumerate}[label=(\roman*)]
			\item The natural action of $S_{\text{lin}}$ on $\mathbf{K}$ is trivial.
			\item $\mathcal{K}$ is equal to $\{tK_0t^{-1}:t\in T\}$ and there is a definable bijection between $\mathcal{K}$ and $T/_\sim$, a quotient of $T$ by a definable equivalence relation.
			\item $S_{\text{lin}}$ acts on $\mathcal{K}$ and the orbit of $K_0$ is definable.
			\item $S_\text{lin}$ acts definably on $K_0$.
		\end{enumerate}
	\end{claim}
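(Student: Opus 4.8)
The plan is to prove \textup{(i)}--\textup{(iv)} in this order, each feeding the next, with the substance concentrated in the last two. For \textup{(i)}: by \Cref{semidirectproperty} the quotient $\mathbf K=R_{\text{def}}/T$ is (isomorphic to) a torus, so $\operatorname{Aut}(\mathbf K)$ is discrete; since $T$ is characteristic in $R_{\text{def}}$ it is $\varphi_{\text{lin}}(S_{\text{lin}})$-stable, so $\varphi_{\text{lin}}$ induces a continuous homomorphism $S_{\text{lin}}\to\operatorname{Aut}(\mathbf K)$, and its image is trivial because $S_{\text{lin}}$ is connected. For the identity $\mathcal K=\{tK_0t^{-1}:t\in T\}$ in \textup{(ii)}: maximal compact subgroups of $R_{\text{def}}$ are connected and mutually conjugate (Cartan--Iwasawa--Malcev), and for any such $K_0$ one still has $R_{\text{def}}=T\rtimes K_0$ (any $x\in T\cap K_0$ is a torsion element of the torsion-free $T$, hence $x=e$); writing a conjugating element as $g=ak$ with $a\in T$ and $k\in K_0$ gives $gK_0g^{-1}=aK_0a^{-1}$, so every maximal compact is a $T$-conjugate of $K_0$.

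\emph{The definable parametrization in \textup{(ii)}, and \textup{(iii)}.} Put $N:=N_{R_{\text{def}}}(K_0)\cap T$. Since $K_0$ is definable (it is compact and linear, hence semialgebraic by \Cref{compactlinear}) and normalizers of definable subgroups are definable, $N$ is a definable subgroup of $T$; and $t_1K_0t_1^{-1}=t_2K_0t_2^{-1}\iff t_2^{-1}t_1\in N$, so $\sim$ is the definable relation ``lying in the same left coset of $N$'' and $[t]\mapsto tK_0t^{-1}$ is a definable bijection $T/{\sim}\to\mathcal K$ (its graph is the image of $\{(t,tkt^{-1}):t\in T,\ k\in K_0\}$ under the quotient in the first coordinate). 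We also record that $N=Z_T(K_0)$: writing $p\colon R_{\text{def}}\to K_0$ for the projection with kernel $T$ and $c_n\colon x\mapsto nxn^{-1}$, one has $p\circ c_n=p$ for all $n\in T$, so for $n\in N$, $c_n\restriction_{K_0}=(p\circ c_n)\restriction_{K_0}=p\restriction_{K_0}=\operatorname{id}_{K_0}$. For \textup{(iii)}: $S_{\text{lin}}$ acts on $\mathcal K$ since automorphisms permute maximal compact subgroups; as $R_{\text{def}}$ is linear, $\operatorname{Ad}\colon R_{\text{def}}\to GL(\mathfrak r)$ (matrix conjugation) is definable, and since a connected subgroup is determined by its Lie algebra, $tK_0t^{-1}\mapsto\operatorname{Ad}(t)(\mathfrak k_0)$ is a definable injection of $\mathcal K$ into the variety of subalgebras of $\mathfrak r$. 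It carries $\varphi_{\text{lin}}(s)(K_0)$ to $\varphi_{\text{Lie}}(s)(\mathfrak k_0)$, and the graph of $\varphi_{\text{Lie}}$ being semialgebraic, $s\mapsto\varphi_{\text{Lie}}(s)(\mathfrak k_0)$ is semialgebraic; hence the orbit map $S_{\text{lin}}\to\mathcal K\cong T/{\sim}$ is definable, in particular the orbit of $K_0$ is definable.

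\emph{Part \textup{(iv)}, and the main obstacle.} The crux is the formula
\[\varphi_{\text{lin}}(s)(k)=t_s\,k\,t_s^{-1}\qquad(s\in S_{\text{lin}},\ k\in K_0),\]
where $t_s\in T$ is any element with $\varphi_{\text{lin}}(s)(K_0)=t_sK_0t_s^{-1}$ (it exists by \textup{(ii)}). To prove it, set $\alpha=\varphi_{\text{lin}}(s)$: by \textup{(i)}, $p\circ\alpha=p$ on $R_{\text{def}}$, so $p(\alpha(k))=k$; writing $\alpha(k)=t_sk't_s^{-1}$ with $k'\in K_0$ we also get $p(\alpha(k))=k'$ since $t_s\in T$, whence $k'=k$. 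Now by \textup{(ii)}--\textup{(iii)} the map $s\mapsto[t_s]\in T/{\sim}=T/Z_T(K_0)$ is definable, while $(t,k)\mapsto tkt^{-1}$ descends to a well-defined definable map $T/Z_T(K_0)\times K_0\to R_{\text{def}}$; composing these shows $(s,k)\mapsto\varphi_{\text{lin}}(s)(k)$ is definable. Combined with the already-established definability of $\varphi_{\text{lin}}^{T}$, this makes $\varphi_{\text{lin}}\colon S_{\text{lin}}\to\operatorname{Aut}(R_{\text{def}})$, and hence $\varphi'$, definable, which is the reason the claim is wanted. I expect \textup{(iv)} to be the genuine difficulty: isolating the formula $\varphi_{\text{lin}}(s)(k)=t_sk t_s^{-1}$ is exactly where the triviality of the action on $\mathbf K$ from \textup{(i)} and the $T$-conjugacy of maximal compacts from \textup{(ii)} are used together to reduce the a priori transcendental behaviour of $\varphi_{\text{lin}}$ on the compact direction to the definable data produced in \textup{(iii)}; parts \textup{(i)} and \textup{(ii)} themselves are soft.
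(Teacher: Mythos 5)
Your proof is correct and its skeleton matches the paper's — (i) via discreteness of $\operatorname{Aut}(\mathbf K)$ and connectedness of $S_{\text{lin}}$, (ii) via conjugacy of maximal compacts plus $R_{\text{def}}=T\cdot K_0$, and (iv) by reducing the action on $K_0$ to the action on $\mathcal K$ — but the key definability step (iii) is carried out by a genuinely different mechanism. The paper identifies a maximal compact $tK_0t^{-1}$ with its Lie algebra by restricting $\exp$ to a bounded neighbourhood of $0$ in $\mathfrak r$, a function that is only definable in $\mathbb R_{\text{an},\exp}$, and then matches $U\cap tK_0t^{-1}$ against $\exp(\mathfrak u\cap\varphi_{\text{Lie}}(s)(\mathfrak k_0))$. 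You instead pass to Lie algebras through $\operatorname{Ad}$ (matrix conjugation, hence definable for the linear group $R_{\text{def}}$), use that distinct connected maximal compacts have distinct Lie algebras to get a definable injection $\mathcal K\hookrightarrow\operatorname{Gr}(\mathfrak r)$, and read off $\Phi(s,[e_T])$ from the semialgebraic map $s\mapsto\varphi_{\text{Lie}}(s)(\mathfrak k_0)$; this buys you step (iii) without invoking the restricted analytic exponential at all, staying semialgebraic over the data already at hand. Your (iv) is also a useful sharpening: the explicit identity $\varphi_{\text{lin}}(s)(k)=t_skt_s^{-1}$, proved from $p\circ\alpha=p$ (which is exactly (i)), is what the paper's map $\sigma_{\Phi(s,[e_T])}\circ p$ unwinds to, and it makes the definability of $\varphi_{\text{lin}}^{K_0}$ transparent, together with your observation that $N_{R_{\text{def}}}(K_0)\cap T=Z_T(K_0)$, which is needed for $(t,k)\mapsto tkt^{-1}$ to descend to $T/{\sim}\times K_0$. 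One small slip: an element of $T\cap K_0$ need not be a torsion element merely because it lies in a compact group (think of an irrational rotation); the correct one-line argument is that $T\cap K_0$ is a compact subgroup of $T\cong\mathbb R^n$ (\Cref{solvabletorsionfree}) and hence trivial — and in any case, for the distinguished $K_0$ coming from the decomposition $R_{\text{def}}=T\rtimes K_0$, triviality of the intersection is already given.
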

	
	\begin{proof}
		\leavevmode
		\begin{enumerate}[label=(\roman*)]
			\item Any automorphism of $R_{\text{def}}$ must fix $T$ which implies that $S_{\text{lin}}$ acts by continuous automorphisms on $\mathbf{K}$ in a natural way; this can be seen as a continuous map $\overline\varphi$ from $S_{\text{lin}}$ to $\text{Aut}(\mathbf{K})$. Since $\mathbf{K}$ is a torus, $\text{Aut}(K)\simeq GL_m(\mathbb{Z})$ for some $m\in\mathbb{N}$ and it is discrete.\footnote{$K\simeq \mathbb{R}^m/\mathbb{Z}^m$ so any automorphism of $K$ is an automorphism of $\mathbb{R}^m$ that leave $\mathbb{Z}^m$ fixed.} But $S_{\text{lin}}$ is connected, so $\overline\varphi$ must be constant on $\text{Aut}(\mathbf{K})$, and the action is trivial.

			\item It is known that all maximal tori of a linear group are conjugate, so $\mathcal K$ is the set of conjugates by elements of $R_{\text{def}}$ of $K_0$. But $R_{\text{def}}=T\cdot K_0$ which implies that $\mathcal K=\{tK_0t^{-1}:t\in T\}$ and is therefore in bijection with the quotient $T/_\sim$ where		
			\[
			t_1\sim t_2 \Leftrightarrow t_1\cdot K_0\cdot t_1^{-1}=t_2\cdot K_0\cdot t_2^{-1}.
			\]		
			Since $K_0$ is a linear tori, it is algebraic and therefore definable; this means that $\sim$ is definable and so is $T/_\sim$ (elimination of imaginaries in $o$-minimal theories).
	
			\item The action of $S_{\text{lin}}$ on $\mathcal{K}$ can be represented by a map \[\Phi:S_{\text{lin}}\times T/_\sim \rightarrow T/_\sim\]
			and we will show that  the restriction of $\Phi$ to $S_{\text{lin}}\times \{[e_T]\}$ is definable where $[e_T]$ is the class of the identity in $T/_\sim$.
			
			We begin by picking an open neighborhood $\mathfrak{u_0}$ of $0$ in $\mathfrak{r}$ such that $\exp :\mathfrak{u_0}\rightarrow R_{\text{def}}$ is a diffeomorphism. If needed, we shrink $\mathfrak{u_0}$ such that we obtain a local diffeomorphism $\exp : \mathfrak{u} \rightarrow U$ with $\mathfrak{u}$ bounded and $U$ open neighborhood of the identity. This restriction is an analytic function defined on a compact subset of $\mathfrak{r}$, and is therefore definable in the o-minimal structure $\mathbb{R}_{\text{an},\exp}$.
			Now we claim that $\Phi(s,[e_T])=[t]$ if and only if
			\[\exp(\mathfrak{u}\cap \varphi_{\text{Lie}}(s)(\mathfrak{k_0}))=U\cap t K_0 t^{-1}\]
			where $\mathfrak{k_0}=\text{Lie}(K_0)$, and since the latter is definable, $\Phi(\cdot,[e_T])$ is a definable function.
			
		The equivalence follows because if we define $\mathfrak{k}_t:=\text{Lie}(K_t)$ with $K_t:=t\cdot K_0\cdot t^{-1}$, then by the Lie correspondance any two distinct maximal compact subgroups $K_{t_1}$ and $K_{t_2}$ correspond to different Lie sub-algebras $\mathfrak{k}_{t_1}$ and $\mathfrak{k}_{t_2}$ of $\mathfrak{r}$. But Lie subalgebras are vector subspaces and  $\mathfrak u$ is an open neighborhood of the identity, so $\mathfrak{k}_{t_1}\neq \mathfrak{k}_{t_2}$ if and only if $(\mathfrak{k}_{t_1}\cap \mathfrak u )\neq (\mathfrak{k}_{t_2} \cap \mathfrak u)$ as required.

			\item For any element $t\in T$ and $\overline k \in \mathbf{K}$ there is a unique element $k_t\in K_t$ such that $p(k_t)=\overline k$ where $p:R\rightarrow R/T$. Moreover the map $\sigma_t:\overline k \rightarrow k_t$ is definable.
			We can now define the action of $S_{\text{lin}}$ on $K_0$ by defining
			\[	\begin{array}{lccl}
			\varphi_{\text{lin}}^{K_0}: & S_{\text{lin}} \times K_0 & \longrightarrow & R_{\text{def}}\\
			& (s,k) & \mapsto & \sigma_{\Phi(s,[e_T])}(p(k)).
			\end{array}
			\]
		\end{enumerate}
	\end{proof}

	Definability of $\varphi_{\text{lin}}$ now follows easily. Let $s\in S_{\text{lin}}$ and $r\in R_{\text{def}}$. Then we can write $r=t\cdot k$ with $t\in T$ and $k\in K_0$. Then we get:
	
	\[\varphi_{\text{lin}}(s)(r):=\varphi_{\text{lin}}(s)(t)\cdot\varphi_{\text{lin}}(s)(k)=\varphi_{\text{lin}}^T(s, t)\cdot\varphi_{\text{lin}}^{K_0}(s, k),\]
	so it is definable in $\mathbb{R}_{\text{an},\exp}$. Definability of $\varphi=\varphi_{\text{lin}}\circ\pi$ follows.
	
\end{proof}

\subsection{Further remarks}

The conditions on $R$ in Theorem \ref{generalcase} are necessary. However, in \cite{Connectedcomp1} the authors exhibit
an example of a group definable in an o-minimal structure for which any Levi subgroup has infinite center, so we know
that this is not a necessary condition for a group $G$ to be definable.

In any group $G$ definable in an o-minimal expansion of $\mathbb R$ the solvable radical must be definable, which implies that
$G/R$ is isomorphic to a definable group (using elimination of imaginaries). It is also known that any Levi subgroup will be a cover of $G/R$, albeit maybe not a finite one (like in the example in \cite{Connectedcomp1}).

We do know, however, that $G$ is an extension of $G/R$ by $R$, so one could hope to give necessary and sufficient conditions understanding the
actions of $G/R$ on $R$ (maybe along the lines which we did in this paper), and then characterizing the definable 2-cocycles in $H^2(G/R, R)$ which give rise to definable groups which are not almost semidirect products of $G/R$ and $R$. This appears to be the case in the example in \cite{Connectedcomp1} and might be the path to improve the results in this paper to get necessary and sufficient conditions for a connected Lie groups $G$ to be definable.

\medskip

A different subject for further research is the concept of the linearizer of a Lie group, which appeared as a very useful tool in Lie theory in the proof of \Cref{generalcase}, the linearizer of a Lie group. Specifically, if $G$ is a group definable in an $o$-minimal expansion of a real closed field $\mathcal{R}$, and $\Lambda_{\text{def}}$ is the intersection of the kernel of all its definable representations in a finite dimensional $\mathcal{R}$-vector space, then we know that $\Lambda_{\text{def}}$ is a definable subgroup of $G$ by descending chain condition and central (adjoint representation). Is $\Lambda$ is the intersection of the kernel of all representations, namely, is $\Lambda_{\text{def}}=\Lambda$? Is $\Lambda$ definable?

	\appendix
	\titleformat{\section}[hang]{\large\bfseries}{Appendix \thesection:\ }{0pt}{}
	\newpage
	
	\section{Algebraicity of linear compact groups}\label{appendixA}
	
	We think the following is well-known but could not find any clear reference so we provide a proof.
	
	\begin{theorem}\label{compactlinear}
		Let $K$ be a connected compact subgroup of $GL_n(\mathbb{R})$. Then $K$ is definable in $(\mathbb{R},+,\cdot,0,1)$, more precisely it is an algebraic subgroup of $GL_n(\mathbb{R})$.
	\end{theorem}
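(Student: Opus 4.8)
The plan is to exhibit $K$ as the real points of a Zariski-closed subgroup of $GL_n(\mathbb{C})$, or directly as a real algebraic subset of $GL_n(\mathbb{R})$, by producing enough polynomial equations. The standard trick is to average an inner product: since $K$ is compact, Haar integration produces a $K$-invariant positive definite symmetric bilinear form on $\mathbb{R}^n$, so after conjugating by a suitable element of $GL_n(\mathbb{R})$ we may assume $K \subseteq O(n)$. Now $O(n)$ is visibly algebraic (cut out by $A^tA = \mathrm{Id}$), so it suffices to show that $K$ is an algebraic subgroup of $O(n)$; equivalently, replacing $O(n)$ by any linear algebraic group, it suffices to show that a compact (connected) subgroup of a real algebraic group is algebraic.

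First I would set up the Zariski closure $\overline{K}$ of $K$ inside $GL_n(\mathbb{R})$ (or $O(n)$); this is automatically an algebraic subgroup, so the content is the reverse inclusion $\overline{K} \subseteq K$, i.e.\ that $K$ is already Zariski-closed. The cleanest route is via the Lie algebra and the correspondence between algebraic groups and their Lie algebras: $\mathfrak{k} = \mathrm{Lie}(K)$ is an algebraic Lie subalgebra of $\mathfrak{gl}_n$ because $K$ is compact — every element $X \in \mathfrak{k}$ has purely imaginary eigenvalues (it is skew-symmetric after the averaging step), hence generates a torus whose closure is algebraic, and a Lie algebra all of whose elements are ``algebraic'' (each generating an algebraic one-parameter subgroup) is the Lie algebra of an algebraic group $A$. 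One then checks that $A$ is connected, compact with the same Lie algebra as $K$, and hence (both being connected with equal Lie algebras inside $GL_n$) $A^\circ = K$; since $K$ is connected this gives $K = A^\circ$, an algebraic group. The footnote-level facts needed here are that the connected component of an algebraic group is algebraic and that a skew-symmetric real matrix generates a one-parameter subgroup with algebraic (toral) closure.

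An alternative, perhaps more self-contained, approach: realize $K$ as the common stabilizer of a finite collection of tensors. By the argument above $K$ fixes a positive definite form; more strongly, one can use that any compact linear group is the stabilizer of a suitable point in a sum of tensor powers of the standard representation and its dual (this is essentially the Chevalley-type statement that closed subgroups of $GL_n$ are stabilizers of lines/vectors in tensor constructions, together with the fact that for compact $K$ one may take an honest fixed vector rather than a fixed line, by averaging). The stabilizer of a vector is an algebraic condition, so this directly presents $K$ as $\{g \in GL_n : g\cdot v = v\}^\circ$ for an explicit $v$ in an explicit representation, which is algebraic.

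The main obstacle I expect is justifying that $K$ equals (the identity component of) its Zariski closure rather than being a proper dense subgroup — a priori a connected Lie subgroup can fail to be closed, let alone algebraic (the example with $e^t, \cos t, \sin t$ in the introduction is exactly such a pathology). Compactness is what rules this out, and the argument must use it essentially: either through the eigenvalue/toral structure of $\mathfrak{k}$ (imaginary eigenvalues $\Rightarrow$ algebraic one-parameter subgroups $\Rightarrow$ algebraic Lie algebra), or through the averaging that forces $K$ into a compact algebraic group $O(n)$ inside which $K$ is automatically closed, reducing the problem to the algebraicity of closed connected subgroups of a compact real algebraic group — a statement provable by the tensor-stabilizer method. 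I would present the averaging-plus-Lie-algebra version as the cleanest, flagging the ``connected component of an algebraic group is algebraic'' fact as the one external input.
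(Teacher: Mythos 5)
Your reduction to $K\subseteq O(n)$ by averaging an inner product is fine (conjugation by a fixed matrix preserves algebraicity), and you correctly identify the crux: showing that $K$ coincides with (a component of) its Zariski closure. But the way you discharge this crux has a genuine gap. The ``footnote-level fact'' you invoke --- that the connected component of an algebraic group is algebraic --- is false in the form you need it: you need the \emph{Hausdorff} identity component of the \emph{real points} of a real algebraic group to be Zariski closed, and $\mathbb{R}_{>0}\subseteq \mathbb{G}_m(\mathbb{R})=\mathbb{R}^{*}$ is a counterexample. (The statement is true for the Zariski identity component, but that is useless here: the Zariski closure of the Hausdorff-connected set $K$ is already Zariski connected.) For \emph{compact} real algebraic groups the Hausdorff identity component is indeed Zariski closed, but that is essentially equivalent to the theorem being proved, so it cannot be cited as an external input. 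Your Lie-algebra step also needs care: the algebraic hull of $\mathbb{R}X$ is the Lie algebra $\mathfrak{t}_X$ of the closure of $\{e^{tX}\}$, which may be strictly larger than $\mathbb{R}X$; it does land inside $\mathfrak{k}$ because $K$ is closed, but even granting $\mathrm{Lie}(\overline{K})=\mathfrak{k}$, all you obtain is $K=\overline{K}(\mathbb{R})^{\circ}$, which returns you to the problematic component issue. Your second route, via the Chevalley tensor-stabilizer theorem, is circular for the same reason: that theorem takes a \emph{Zariski-closed} subgroup as input, which is exactly what has to be established for $K$.

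The missing ingredient --- and the one the paper actually supplies in its appendix --- is a direct production of $K$-invariant polynomials separating $K$ from every disjoint $K$-orbit in $\mathcal{M}_n(\mathbb{R})$: approximate a continuous function separating the two compact orbits by a polynomial (Weierstrass), then average over $K$ against Haar measure; the average is still a polynomial because the span of a $K$-orbit in $\mathbb{R}[V]$ is finite dimensional (\Cref{amenable}). The ideal generated by these invariant polynomials is finitely generated by Noetherianity, and its zero set is exactly $K$. This is the step your sketch gestures at with ``averaging'' but never carries out, and it is where compactness is genuinely used; without it, neither of your two routes closes.
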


	The standard statement one finds in the literature states that a compact real Lie group is Lie-isomorphic to an algebraic group. Although quite interesting, this statement does not even imply definability of a connected compact linear group because all we get is up to isomorphism. We will follow the sketch stated in \cite{Vinberg} which uses a classical analytic argument. We will need the following lemma.

	Any compact group $K$ admits a Haar measure which makes it amenable. Results in \cite[Chapter 12.4]{Wagon} yield the following.

	\begin{fact}\label{amenable}
	Let $K$ be a compact group acting linearly and continuously on a real vector space $V$, and let $\mu$ be the Haar measure of $K$. Let $v\in V$ such that the subspace $W:=\text{Span}(K\cdot v)$ is finite dimensional. Then
	\[
	\int_K k\cdot v \; d\mu
	\]
	is a fixed by $K$ and lays in $W$(in fact, it belongs to the convex Hull of $K\cdot v$).
	\end{fact}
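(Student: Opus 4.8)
The plan is to run the classical averaging argument; the only real subtlety is making the vector-valued integral $\bar v:=\int_K k\cdot v\,d\mu$ meaningful, which is exactly what finite-dimensionality of $W$ provides. First I would observe that $W=\mathrm{Span}(K\cdot v)$ is $K$-stable: for $g,k\in K$ we have $g\cdot(k\cdot v)=(gk)\cdot v\in K\cdot v$, and since $gK=K$ the linear map $g$ sends $W$ into $W$. Fixing a basis $e_1,\dots,e_m$ of $W$ and writing $k\cdot v=\sum_i f_i(k)\,e_i$, continuity of the action makes each $f_i\colon K\to\mathbb R$ continuous, hence integrable against the finite Haar measure $\mu$; I then define
\[
\bar v:=\int_K k\cdot v\,d\mu=\sum_i\Big(\int_K f_i\,d\mu\Big)\,e_i,
\]
which manifestly lies in $W$. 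This settles both well-definedness and the assertion $\bar v\in W$ at once.

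For the fixed-point property I would use invariance of the Haar measure. Since $K$ is compact it is unimodular, so $\mu$ is left-invariant, and since $g|_W$ is a linear endomorphism of the finite-dimensional space $W$ it commutes with the (componentwise) integral. Hence for any $g\in K$,
\[
g\cdot\bar v=\int_K g\cdot(k\cdot v)\,d\mu=\int_K (gk)\cdot v\,d\mu=\int_K k\cdot v\,d\mu=\bar v,
\]
the last equality being left-invariance (the substitution $k\mapsto g^{-1}k$). Thus $\bar v$ is fixed by all of $K$.

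The final refinement---that $\bar v$ lies in the convex hull of $K\cdot v$---I would obtain by a separation argument, now normalizing $\mu$ to be a probability measure. The orbit $K\cdot v$ is compact, being the continuous image of the compact group $K$, so by Carath\'eodory's theorem its convex hull $C:=\mathrm{conv}(K\cdot v)$ is already compact, in particular closed. If $\bar v\notin C$, the Hahn--Banach separation theorem in the finite-dimensional space $W$ furnishes a linear functional $\varphi$ and a constant $c$ with $\varphi(\bar v)>c\geq\varphi(w)$ for all $w\in C$, so in particular $\varphi(k\cdot v)\leq c$ for every $k\in K$. Since $\varphi$ is linear and continuous it passes through the integral, giving
\[
\varphi(\bar v)=\int_K \varphi(k\cdot v)\,d\mu\leq\int_K c\,d\mu=c,
\]
which contradicts $\varphi(\bar v)>c$. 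Hence $\bar v\in C$, as claimed.

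I expect no serious obstacle: once $W$ is finite-dimensional every step reduces to a routine manipulation of a vector-valued integral. The one point requiring genuine care is the convex-hull refinement, where one must remember to normalize $\mu$ to total mass $1$ and to invoke compactness of $\mathrm{conv}(K\cdot v)$ (via Carath\'eodory) so that the separating-hyperplane argument yields membership in the convex hull itself rather than merely in its closure.
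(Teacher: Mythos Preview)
Your proof is correct. In the paper this statement is not actually proved: it is recorded as a Fact with a citation to \cite[Chapter 12.4]{Wagon}, so strictly speaking there is nothing to compare against. Your argument---defining $\bar v$ componentwise in the finite-dimensional $W$, then using left-invariance of Haar measure for the fixed-point property and a Hahn--Banach separation (together with compactness of $\mathrm{conv}(K\cdot v)$ via Carath\'eodory) for the convex-hull refinement---is the clean, standard route.

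For what it is worth, the authors had drafted (and then commented out) a more laborious alternative: they construct $\bar v$ as the limit of a Cauchy net indexed by finite open coverings $\pi$ of the orbit $K\cdot v$, with the term at $\pi$ a Haar-weighted sum of sample points chosen from the covering sets, and then verify in three separate claims that the net is Cauchy, that the limit is independent of the sample-point choices, and that it is $K$-fixed. That approach is essentially a hands-on Riemann-sum construction of the vector-valued integral; it buys independence from any pre-existing theory of such integrals, at the cost of substantial bookkeeping. Your componentwise reduction dispatches all of that in one line, and your separation argument for the convex-hull claim is sharper than what the net construction directly yields.
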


\begin{proof}[Proof of Theorem \ref{compactlinear}.]

Let us consider the vector space of $n$-squared matrices $V:=\mathcal{M}_n(\mathbb{R})$ on which $K$ acts (linearly) by multiplication.
	
Let $\mathbb R\left[ V \right]$ be the algebra of polynomials of $V$ (if $\left(e_1,\dots ,e_{n^2} \right)$ is any basis of $V$ and $\left(e_1^*,\dots ,e_{n^2}^* \right)$ is the corresponding a dual basis then $\mathbb{R}\left[ V \right] := \mathbb{R}\left[ e_1^*,\dots , e_n^* \right] $). Since $K$ acts linearly on $V$ it also acts linearly on its polynomial algebra: for $g\in K$, $P\in \mathbb{R}\left[ V \right]$ and $v\in V$, $g\cdot P (v):= P(g^{-1}\cdot v)$.

For any $P\in \mathbb{R}\left[ V \right]$ the degree of $P$ stays bounded under the action. That implies that $\text{Span}(K\cdot P)$ is a finite dimensional $K$-invariant subspace of $\mathbb R\left[ V\right]$.
	
Let us go back to the action of $K$ on $\mathcal{M}_n(\mathbb{R})$, and let $\mathcal K_1$ and $\mathcal K_2$ be any two orbits. These are compact disjoint subsets so there is an analytic function $f$ whose value on on orbit is $0$ and $1$ on the other one. By \emph{Weierstrass approximation theorem}, for any $\varepsilon$, there is a polynomial $P$ in $\mathbb{R}\left[ \mathcal{M}_n(\mathbb{R}) \right]$ which is $\varepsilon$-close to $f$, which implies that $P(\mathcal K_0)\subseteq \left[-\varepsilon,\varepsilon\right]$ and $P(\mathcal K_1)\subseteq \left[1-\varepsilon,1+\varepsilon\right]$.

$K=\mathcal K_0$ itself is the orbit of the identity in $\mathcal{M}_n(\mathbb{R})$. For any other orbit $\mathcal K$, let $P_{\mathcal K}$ be any polynomial such that $P_{\mathcal K}(\mathcal K_0)\subseteq \left[-\varepsilon,\varepsilon\right]$ and $P_{\mathcal K}(\mathcal K)\subseteq \left[1-\varepsilon,1+\varepsilon\right]$ with $\varepsilon <1/2$. By  \Cref{amenable}, the element
\[\int_K k\cdot P_{\mathcal K}\;d\mu \] is an element of $\mathbb R[V]$ which is fixed by $K$. Because the value is constant at $K$, by subtracting a constant (which by construction is the Haar-measure average of the values of $k\cdot P_{\mathcal K}$ on $K$ so it must be less than $\epsilon$) we get a polynomial which when evaluated in $K$ gives 0 and different from 0 (in fact larger than $1-\epsilon$) for any element of $\mathcal K$.

Now let $\mathcal{I}$ be the ideal generated by all of these polynomials, where we vary $\mathcal K$ over all $K$-orbits in $\mathcal{M}_n(\mathbb{R})$ which are different from $K$. Since $\mathbb{R}\left[ \mathcal{M}_n(\mathbb{R}) \right]$ is Noetherian, $\mathcal{I}$ is finitely generated, say by $P_1,\dots ,P_h$. Let $X=(x_{i,j})\in\mathcal{M}_n(\mathbb{R})$, the system of equations in $x_{i,j}$ given by all of the coefficient equalities of $\{P_j(X)=0\}_{i=1}^h$ gives us a definition for $K$. Indeed, since the $P_i$'s are $K$-invariant, any element in $K$ satifies all these equations. On the other hand, if a matrix $M$ is not in $K$ then its orbit $\mathcal K$ is disjoint from $K$ and the polynomial $P_\mathcal K\in \mathcal {I}$ satisfies $P_{\mathcal K}(M)\neq 0$. This must be reflected in at least one of the generators. So
	
	\[K=\left\{ X\in \mathcal{M}_n(\mathbb{R}) : \; \forall \, 1\leq i \leq h \; P_i(X)=0   \right\}.\]
	
		Those equations are purely algebraic giving us the algebraic representation we were looking for.
\end{proof}

\pagebreak
\bibliographystyle{abbrv}
\bibliography{Paperlinealbiblio}

\begin{thebibliography}{10}

\bibitem{BJOcom}
E.~Baro, E.~Jaligot, and M.~Otero.
\newblock Commutators in group definable in $o$-minimal strutures.
\newblock {\em Proceedings of the American mathematical Society}, 2012.

\bibitem{COSsolvable}
A.~Conversano, A.~Onshuus, and S.~Starchenko.
\newblock Solvable lie groups definable in $o$-minimal theories.
\newblock {\em Journal of the Institute of Mathematics of Jussieu}, 2016.

\bibitem{levidecomin}
A.~Conversano and A.~pillay.
\newblock On levi subgroups and the levi decomposition for groups definable in
  o-minimal structures.
\newblock {\em Fundamenta Mathematicae}, 2011.

\bibitem{Connectedcomp1}
A.~Conversano and A.~Pillay.
\newblock Connected component of definable groups and $o$-minimality i.
\newblock {\em Advances in Mathematics}, 26 July 2011.

\bibitem{universalcover}
M.~Edmundo and P.~Eleftheriou.
\newblock The universal covering homomorphism in $o$-minimal expansions of
  groups.
\newblock {\em Mathematicla Logical Quarterly}, 2007.

\bibitem{universalcover2}
M.~Edmundo, P.~Eleftheriou, and L.~Prelli.
\newblock The universal covering map in $o$-minimal expansions of groups.
\newblock {\em Topology and its applications}, 2013.

\bibitem{Hochschild}
G.~Hochschild.
\newblock {\em The structure of Lie groups}.
\newblock 1965.

\bibitem{Hochschildmostow}
G.~Hochschild and G.~D. Mostow.
\newblock Representations and representative functions of lie groups.
\newblock {\em Annals of Mathematics, Second Series}, 66(3):495--542, Nov.
  1957.

\bibitem{Levidec}
E.~Levi.
\newblock Sulla struttura dei gruppi finiti e continui.
\newblock {\em Atti della Reale Accademia delle Scienze di Torino}, 1905.

\bibitem{Vinberg}
A.~L. Onishchik and E.~B. Vinberg.
\newblock {\em Lie Groups and Lie Algebras III}.
\newblock Springer-Verlag, 1991.

\bibitem{PPSlinear}
Y.~Peterzil, A.~Pillay, and S.~Starchenko.
\newblock Linear groups definable in $o$-minimal structures.
\newblock {\em Journal of Algebra 247}, 2002.

\bibitem{Pgroupchunk}
A.~Pillay.
\newblock On groups and fields definable in $o$-minimal structures.
\newblock {\em Journal of Pure and Applied Algebra}, 53, 1988.

\bibitem{vdD}
L.~van~den Dries.
\newblock {\em Tame topology and o-minimal structures}, volume 248 of {\em
  London Mathematical Society Lecture Note Series}.
\newblock Cambridge University Press, Cambridge, 1998.

\bibitem{Wagon}
S.~Wagon.
\newblock {\em The Banach-Tarski Paradox}.
\newblock Cambridge University Press, 2012.

\end{thebibliography}
	
\end{document}